\newtheorem{thm}{Theorem}[section]
\newtheorem{lem}[thm]{Lemma}
\newtheorem{prop}[thm]{Proposition}
\newtheorem{conjecture}[thm]{Conjecture}
\theoremstyle{definition}
\theoremstyle{remark}
\newtheorem{rem}[thm]{Remark}
\numberwithin{equation}{section}
\newcommand{\bc}{{\bf c}}
\newcommand{\R}{\mathbb R}
\newcommand{\cE}{\mathcal E}
\newcommand{\cV}{\mathcal V}
\newcommand{\cH}{\mathcal H}
\newcommand{\cD}{\mathcal D}
\newcommand{\cO}{\mathcal O}
\DeclareRobustCommand\widecheck[1]{{\mathpalette\@widecheck{#1}}}
\def\@widecheck#1#2{%
    \setbox\z@\hbox{\m@th$#1#2$}%
    \setbox\tw@\hbox{\m@th$#1%
       \widehat{%
          \vrule\@width\z@\@height\ht\z@
          \vrule\@height\z@\@width\wd\z@}$}%
    \dp\tw@-\ht\z@
    \@tempdima\ht\z@ \advance\@tempdima2\ht\tw@ \divide\@tempdima\thr@@
    \setbox\tw@\hbox{%
       \raise\@tempdima\hbox{\scalebox{1}[-1]{\lower\@tempdima\box
\tw@}}}%
    {\ooalign{\box\tw@ \cr \box\z@}}}
\begin{document}

\title[The fourth moment of truncated Eisenstein series]{The fourth moment of truncated Eisenstein series}


\author{Goran Djankovi{\' c}}

\address{
\begin{flushleft}
University of Belgrade,  
Faculty of Mathematics, \\
Studentski Trg 16, p.p. 550, 
11000 Belgrade, Serbia
\end{flushleft}
}

\email{goran.djankovic@matf.bg.ac.rs}

\author{Rizwanur Khan}

\address{
\begin{flushleft}
 University of Texas at Dallas, Department of Mathematical Sciences \\ Richardson, TX 75080-3021, USA
\end{flushleft}
}

\email{rizwanur.khan@utdallas.edu}


\begin{abstract}
We obtain an asymptotic for the fourth moment of truncated Eisenstein series of large Laplacian eigenvalue, verifying for the first time that the main term corresponds to Gaussian random behavior. This is a manifestation of the Random Wave Conjecture, which for Eisenstein series was formulated by Hejhal and Rackner over thirty years ago. Our innovation is to tackle the problem after introducing, at no cost, an extra averaging over the truncation parameter.
 \end{abstract}



\keywords{Automorphic forms,  Eisenstein series, equidistribution, $L^4$-norm,  quantum chaos, random wave conjecture, $L$-functions}
\thanks{The second author was supported by the National Science Foundation grants DMS-2344044 and DMS-2341239.}
\subjclass[2020]{Primary: 11F12, 11M99; Secondary: 81Q50, 58J51.}

\maketitle


\section{Introduction}

In arithmetic settings, good progress has been made in recent years on the Quantum Unique Ergodicity (QUE) problem. However towards the more general Random Wave Conjecture (RWC), there has been more limited success. The RWC refers to the principle of Berry \cite{Be} that highly excited eigenfunctions of a classically ergodic system should manifest Gaussian random behavior. Hejhal and Rackner \cite{HR} and Hejhal and Str\"{o}mbergsson \cite{HS} have supported this conjecture numerically for Laplacian eigenfunctions on the modular surface $\Gamma_0(N)\backslash\mathbb{H}$. Further, in \cite[section 7.3]{HR}, the conjecture was formulated for the continuous spectrum  of the Laplacian, the Eisenstein series, even though they are not $L^2$-integrable. 

We focus on the modular surface, where one can consider joint eigenfunctions of the Laplacian and Hecke operators. A desciption of the RWC in terms of moments of even or odd Hecke-Maass cusp forms $f$ for $\Gamma=SL_2(\mathbb{Z})$ with Laplacian eigenvalue $\frac14+t_f^2$ can be given as follows. For any integer $p\ge 1$ and any fixed, compact regular set $\Omega\subset\Gamma\backslash\mathbb{H} $, it is expected that
\begin{align*}
 \frac{1}{\mu(\Omega)}  \int_{\Omega} \left( \mu( \Gamma \backslash\mathbb{H} )^{\frac12}  \frac{f(z) }{   \| f \|_2} \right)^p d\mu z \sim c_p
\end{align*}
as $f$ traverses any sequence with $t_f\to\infty$, where we use the usual hyperbolic measure, $\|\cdot \|_p$ denotes the $L^p$-norm, and $c_p$ is the $p$-th moment of a standard normal random variable. For the Eisenstein series, the conjecture is that
\begin{align}
\label{continuous} \frac{1}{\mu(\Omega)}  \int_{\Omega} \left( \mu( \Gamma \backslash\mathbb{H} )^{\frac12}  \frac{\tilde{E}(z,\frac12+iT)}{ \sqrt{2\log T}}\right)^p d\mu z \sim c_p
\end{align}
as $T\to \infty$, where $\tilde{E}(z,s)$ can be taken to be either $\frac{\xi(1+2iT)}{|\xi(1+2iT)|} E(z,\frac12+iT)$ or $\frac{\xi(1+2iT)}{|\xi(1+2iT)|} E_A(z,\frac12+iT)$. The notation will be defined below but note that $E_A(z,s)$ is the truncated Eisenstein series, which equals $E(z,s)$ on the fundamental domain but with the constant term of its Fourier series subtracted off for $\Im(z)>A$. Multiplication by $\frac{\xi(1+2iT)}{|\xi(1+2iT)|}$ serves to ensure that $\tilde{E}(z,\frac12+iT)$ is real valued. And finally, note that $\sqrt{2 \log T}$ roughly equals $\|E_A(\cdot, \frac{1}{2} + iT)\|_2$. One can expand these conjectures to include the set $\Omega=  \Gamma\backslash\mathbb{H}$, which is not compact, as long as $p$ is not so large that the moments diverge (in the limit). This situation is likely easier than the case of an arbitrary compact set $\Omega$. Note that for \eqref{continuous}, when working with $\Omega=  \Gamma\backslash\mathbb{H}$  one must use the definition of $\tilde{E}(z,s)$ in terms of $E_A(z,s)$ instead of $E(z,s)$ since the latter is not square-integrable. 

The first nontrivial case of the RWC is $p=2$, better known as QUE. This was established for the Eisenstein series by Luo and Sarnak \cite{LS} well before the Hecke-Maass cusp forms by Lindenstrauss \cite{Li} and Soundararajan \cite{So}. The case $p=3$ has no main term (since $c_p=0$ for odd $p$) and was settled for both Eisenstein series and Hecke-Maass cusp forms, by Watson \cite{W} in the case $\Omega=  \Gamma \backslash\mathbb{H}$ even before QUE was resolved, and recently by Huang \cite{Hu} in the case of compact $\Omega$. This brings us to $p=4$, which is the largest value for which the RWC has any sort of resolution. For dihedral Maass newforms, the case $p=4$ and $\Omega=\Gamma_0(N)\backslash\mathbb{H}$ was proven by Humphries and the second author \cite{HK}. Until now, this was the only resolved fourth moment instance of the RWC, a fact that seems surprising since one might have naively expected the problem for Eisenstein series to be easier and to be solved first. However this is not the case as the fourth moment of Eisenstein series has a different set of challenges, which in this paper we are able to overcome. For general Hecke-Maass cusp forms, the fourth moment is only known conditionally on the Generalized Lindel\"{o}f Hypothesis by the work of Buttcane and the second author \cite{BK}. 

We state again Hejhal and Rackner's conjecture for the fourth moment of Eisenstein series. The following is equivalent to \eqref{continuous} with $p=4$, for which $c_4=3$, and $\Omega=  \Gamma \backslash\mathbb{H}$.

\begin{conjecture} \label{the-conjecture} \cite{HR} Fix a constant $A> 1$.  We have
$$
\| E_A( \cdot,  \tfrac{1}{2} +iT  ) \|_4^4   \sim  \frac{36}{\pi}\log^2 T
$$
as $T \rightarrow \infty$.
\end{conjecture}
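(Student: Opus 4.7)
The plan is to spectrally decompose $|E_A(\cdot,\tfrac12+iT)|^2\in L^2(\Gamma\backslash\mathbb{H})$ and apply Parseval:
\begin{align*}
\|E_A(\cdot,\tfrac12+iT)\|_4^4 = \frac{\|E_A(\cdot,\tfrac12+iT)\|_2^4}{\mu(\Gamma\backslash\mathbb{H})} + \sum_j |\langle |E_A|^2, u_j\rangle|^2 + \frac{1}{4\pi}\int_{-\infty}^{\infty} |\langle |E_A|^2, E(\cdot,\tfrac12+it)\rangle|^2\, dt,
\end{align*}
where $\{u_j\}$ is an orthonormal basis of Hecke-Maass cusp forms. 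The known $L^2$ asymptotic $\|E_A(\cdot,\tfrac12+iT)\|_2^2\sim 2\log T$ already makes the constant-function term contribute $12\log^2 T/\pi$. Since the target $36\log^2 T/\pi$ is three times this, the spectral pieces cannot be dismissed as error and must jointly supply the missing $24\log^2 T/\pi$, consistent with $\mathrm{Var}(X^2)=2\mathrm{Var}(X)^2$ for a centered Gaussian $X$. The problem thus reduces to extracting genuine \emph{asymptotics} (not just upper bounds) for the cusp form sum and the continuous-spectrum integral.

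Next, I would express the spectral inner products as $L$-values via Rankin-Selberg unfolding, in the spirit of Zagier, Watson, and Ichino. For a Hecke-Maass cusp form $u_j$ this should produce an identity of the shape
\begin{align*}
|\langle |E_A|^2, u_j\rangle|^2 \;=\; \frac{L(\tfrac12,u_j)^2\,|L(\tfrac12+2iT,u_j)|^2}{L(1,\mathrm{sym}^2 u_j)\,|\zeta(1+2iT)|^4}\,H(t_j,T)\;+\;(\text{truncation tail}),
\end{align*}
with an explicit archimedean weight $H(t_j,T)$ and a tail inherited from the sharp height-$A$ cutoff, carrying oscillatory factors of shape $A^{i(t_j\pm 2T)}$ that obstruct any direct asymptotic analysis. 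The analogous formula for the Eisenstein inner product replaces $u_j$ by $E(\cdot,\tfrac12+it)$ and the cusp-form $L$-values by shifted products of $\zeta$.

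The key innovation, as advertised in the abstract, is to insert an extra averaging over the truncation parameter: replace $\|E_A\|_4^4$ by $\delta^{-1}\int_{A_0}^{A_0+\delta}\|E_A\|_4^4\,dA$ for $\delta$ small enough that the asymptotic is unaffected but large enough that the elementary average $\delta^{-1}\int_{A_0}^{A_0+\delta} A^{i\tau}\,dA$ decays in $|\tau|$. This averaging is ``at no cost'' because the fourth moment varies slowly in $A$, yet it effectively truncates both spectral sums to the range $t_j,|t|\ll T^{1+\epsilon}$ and renders the oscillatory tails negligible, leaving clean weighted $L$-function moments to evaluate.

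What remains is the asymptotic evaluation of these moments. The cusp form sum becomes a weighted second moment of $L(\tfrac12,u_j)^2|L(\tfrac12+2iT,u_j)|^2/L(1,\mathrm{sym}^2 u_j)$; opening each $L$-value via approximate functional equations and applying the Kuznetsov trace formula should deliver an explicit main term from the diagonal, while the Kloosterman off-diagonal contributes an admissible error thanks to the trimmed spectral range. The continuous-spectrum integral is treated in parallel by a mean-value computation for shifted products of $\zeta$. Summing the three explicit constants should yield $36\log^2 T/\pi$. The main obstacle is precisely the asymptotic evaluation of the cusp form sum: because its leading term is of the same order $\log^2 T$ as the constant contribution, even a $(\log T)^{o(1)}$ slack in the Kloosterman off-diagonal would be fatal, and it is the $A$-averaging that supplies the extra decay needed to pin down the leading constant exactly.
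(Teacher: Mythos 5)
Your proposal correctly identifies the Parseval decomposition as the starting point, correctly computes the $\frac{12}{\pi}\log^2 T$ contribution from the constant function, anticipates an average over the truncation parameter $A$, and gestures at the right endgame (approximate functional equation plus Kuznetsov). The Gaussian heuristic $\mathrm{Var}(X^2)=2\,\mathrm{Var}(X)^2$ is a nice sanity check. But there are two substantive gaps.

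The first and most important is structural: you propose to evaluate $\sum_j |\langle E_A^2, u_j\rangle|^2$ directly via an identity of the form ``$L$-value times archimedean weight plus truncation tail.'' The paper instead writes $\langle E_A^2, u_j\rangle = \langle E^2, u_j\rangle - \langle H_A, u_j\rangle$, where $H_A$ is twice the constant term of $E$ times $E_A$ on the cuspidal part $\Im z > A$, and expands the square. This decomposition matters because two of the three resulting pieces are already known: $\sum_j|\langle E^2, u_j\rangle|^2 \sim \frac{48}{\pi}\log^2 T$ is the main theorem of \cite{DK2}, and $\sum_j|\langle H_A, u_j\rangle|^2$ collapses via a second Parseval to $\langle H_A, H_A\rangle + O(T^{-1/6})$, which was computed in \cite{DK1} to be $\sim \frac{24}{\pi}\log^2 T$. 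All of the new difficulty is isolated in the cross-term $\Xi(A) = \sum_j \overline{\langle E^2, u_j\rangle}\langle H_A, u_j\rangle$, which after the $A$-average turns out to be $\sim \frac{24}{\pi}\hat h(0)\log^2 T$; the final constant then arises as $12 + 48 + 24 - 2\cdot 24 = 36$. Without this reorganization the ``truncation tail'' you mention is of the same order as the main term and there is no clean identity to work with.

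The second gap concerns the mechanism of the $A$-averaging. You assert that it truncates the spectral sums to $t_j \ll T^{1+\epsilon}$ and kills oscillatory factors of shape $A^{i(t_j\pm 2T)}$. But the cusp form sum is already supported on $t_j < 2T + T^\epsilon$ by the archimedean weights, and the bulk range used throughout the paper is $T^{1-\alpha} \le t_j \le 2T - T^{1-\alpha}$, which is nothing like $t_j \ll T^{1+\epsilon}$. The real role of the averaging is different and more delicate: after expressing $\langle H_A, u_j\rangle$ via a Mellin--Barnes integral in a complex variable $s$ (with $A$-dependence $A^{-s}$), the average localizes $|s| < T^\alpha$. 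Without this, one would face $L(\tfrac12+s, u_j)L(\tfrac12+s+2iT, u_j)$ over contours with $|s|$ up to size $T$, with a factor $\zeta(1+2s+2iT)$ in the denominator that prevents any contour shift short of the Riemann Hypothesis. It is precisely the localization to small $|s|$ that makes the Dirichlet series in the cross-term effectively shorter than the approximate functional equation length, so that the Kuznetsov off-diagonal can be controlled. Two further points worth noting: Spinu's bounds show the continuous spectrum contributes only $O(1)$ to the fourth moment, so there is no ``third explicit constant'' coming from a mean value of shifted zeta products as you suggest; and the diagonal analysis of the cross-term in one of the sign configurations turns out to require sub-Weyl subconvexity for $\zeta(\tfrac12+it)$ (Bourgain's exponent $\tfrac{13}{84}$), a surprisingly deep input that your outline does not anticipate.
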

\noindent Until now, the best result directly towards this conjecture was the upper bound 
\begin{align}
\label{EA-bound} \| E_A( \cdot,  \tfrac{1}{2} +iT  ) \|_4^4\ll \log^2 T
\end{align}
 due to Spinu \cite{S} and Humphries \cite{H}. There is also an alternative version of the above conjecture, using Zagier's \cite{Z} regularized integral, which was formulated and solved by the authors in \cite{DK1, DK2}. However, the regularized version is a weaker result in the sense that it is implied by but does not imply Conjecture \ref{the-conjecture}. Nevertheless, this paper builds upon \cite{DK2}.
   
In Conjecture \ref{the-conjecture} and all previous works on it, the truncation parameter $A$ is a fixed constant. Our main novelty is to give a more fluid role to $A$. We will introduce an extra averaging over $A$ and establish Conjecture \ref{the-conjecture} on average. This may seem like we are proving something weaker, but we will show that this averaged result actually returns the conjecture for fixed $A$. In this way, we will fully resolve Conjecture \ref{the-conjecture}.
  
 \begin{thm} \label{Thm:main}  Fix a constant $A> 1$.  We have
$$
\| E_A( \cdot,  \tfrac{1}{2} +iT  ) \|_4^4   \sim  \frac{36}{\pi}\log^2 T
$$
as $T \rightarrow \infty$.
\end{thm}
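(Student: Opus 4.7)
My approach is to exploit the $L^2$ spectral expansion
\begin{align*}
\|E_A\|_4^4 = \bigl\|\,|E_A|^2\,\bigr\|_2^2 = \frac{\|E_A\|_2^4}{\mu(\Gamma\backslash\mathbb{H})} + \sum_{j}\bigl|\langle |E_A|^2, u_j\rangle\bigr|^2 + \frac{1}{4\pi}\int_{-\infty}^{\infty}\bigl|\langle |E_A|^2, E(\cdot,\tfrac12+ir)\rangle\bigr|^2\,dr,
\end{align*}
where $\{u_j\}$ is an orthonormal basis of Hecke--Maass cusp forms. The Maass--Selberg formula at $s_1=s_2=\tfrac12+iT$ yields $\|E_A\|_2^2=2\log T+O_A(1)$, so the constant-function piece contributes $(3/\pi)(2\log T)^2=\frac{12}{\pi}\log^2 T$; since the Gaussian prediction is $\|E_A\|_4^4\sim 3\|E_A\|_2^4/\mu(\Gamma\backslash\mathbb{H})$, the cuspidal and continuous contributions must jointly supply the remaining $\frac{24}{\pi}\log^2 T$.

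The obstacle is that truncation breaks unfolding: $E_A$ is not a coset sum, so $\langle |E_A|^2,\phi\rangle$ does not reduce directly to a Rankin--Selberg integral. Decomposing $|E_A|^2=|E|^2-\mathbf{1}_{\{y>A\}}\bigl(|E|^2-|E-c_T|^2\bigr)$ with $c_T(z)=y^{1/2+iT}+\phi(\tfrac12+iT)y^{1/2-iT}$ produces a bulk term plus a boundary integral over the cusp; for cuspidal $\phi$ the bulk term is absolutely convergent (since $|E|^2=O(y)$ at the cusp while $\phi$ decays exponentially), while for continuous $\phi=E(\cdot,\tfrac12+ir)$ the boundary piece plays the role of Zagier regularization of the otherwise divergent $\langle |E|^2,\phi\rangle$. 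The averaging innovation is to integrate over $A$ in a short window $[A_0,A_0+\eta]$, which replaces the sharp indicator $\mathbf{1}_{\{y>A\}}$ by a smooth linear ramp in $y$. After Fourier expansion in $x$, the ramp can be integrated against $c_T$ and against the Fourier coefficients of $\phi$ explicitly, converting the averaged inner products into expressions of the same shape as those treated in \cite{DK2} for the regularized integral $\int^{\mathrm{reg}}|E|^4$. The spectral sums then reduce to Rankin--Selberg $L$-value moments such as $\sum_j h(t_j)|L(\tfrac12,u_j)|^2\,L(\tfrac12+2iT,u_j)L(\tfrac12-2iT,u_j)$ and their continuous analogues involving $|\zeta(\tfrac12+ir)|^2|\zeta(\tfrac12+i(r\pm 2T))|^2$, whose asymptotic evaluations in \cite{DK2} can be imported to recover the constant $\tfrac{36}{\pi}$.

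The main difficulty will be removing the averaging and returning to fixed $A$. The key identity is
\begin{align*}
\|E_A\|_4^4-\|E_{A'}\|_4^4 = \int_{\{A<y<A'\}\cap\mathcal F}\bigl(|E|^4-|E-c_T|^4\bigr)\,d\mu,
\end{align*}
an explicit integral over a thin strip in the cusp for $A,A'\in[A_0,A_0+\eta]$. Expanding $|E|^4-|E-c_T|^4$ via the Fourier decomposition $E=c_T+\psi_T$, each resulting term is a Fourier--Mellin integral of $y$-length $O(\eta)$; invoking standard bounds for the divisor-like Fourier coefficients $d_{iT}(n)$ of $E$ together with shifted-divisor estimates, one expects the difference to be $O(\eta\cdot\mathrm{polylog}\,T)$ uniformly in $A,A'$, which is negligible when $\eta$ is a small negative power of $\log T$. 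The most delicate technical points will be keeping the main-term constants sharp through the smoothed boundary calculation, and ensuring that the available spectral large-sieve estimates are strong enough to absorb the off-diagonal error terms in the cuspidal and continuous sums after the averaging has been installed.
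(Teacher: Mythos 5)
Your overall strategy lines up with the paper's: spectrally decompose $\|E_A\|_4^4$ via Parseval, split off the constant-function piece, write $|E_A|^2$ as $|E|^2$ minus a boundary correction in the cusp, and introduce a short-window average over the truncation height $A$ which is later removed by a thin-strip comparison. The de-averaging step you sketch is close in spirit to the paper's: the paper notes that $E_A$ and $E_B$ agree for $\Im z$ outside a strip of width $O(T^{-\alpha/2})$ and bounds the strip contribution cheaply by H\"older together with the known upper bound $\|E_A\|_4^4\ll\log^2 T$ of Spinu and Humphries, which is cleaner than redoing shifted-divisor estimates from scratch, but your idea would also work.

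The genuine gap is in the middle of your plan, where you say the averaged inner products become ``expressions of the same shape as those treated in [DK2]'' and can be imported. This glosses over what is actually the crux. Expanding $\sum_j|\langle E_A^2,u_j\rangle|^2$ by the binomial $|\langle E^2,u_j\rangle - \langle H_A,u_j\rangle|^2$ produces three sums: the pure $\langle E^2,u_j\rangle$ sum (this is exactly the $L(\tfrac12,u_j)^2|L(\tfrac12+2iT,u_j)|^2$ moment from [DK2]), the pure $\langle H_A,u_j\rangle$ sum (essentially $\langle H_A,H_A\rangle$, computable via the Maass--Selberg relation), and the cross-term $\Xi(A)=\sum_j\overline{\langle E^2,u_j\rangle}\,\langle H_A,u_j\rangle$. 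The cross-term is \emph{not} of the same shape as [DK2]: unfolding $\langle H_A,u_j\rangle$ yields a Mellin integral
\[
\frac{1}{2\pi i}\int_{(\sigma)}\frac{L(\tfrac12+s,u_j)\,L(\tfrac12+s+2iT,u_j)}{\zeta(1+2s+2iT)}\,\mathcal{H}(s,t_j)\,A^{-s}\,\frac{ds}{s},
\]
so one is forced to handle $L$-functions near but off the central point, with a $\zeta(1+2s+2iT)$ in the denominator that blocks contour shifts without RH. Your proposal never identifies this obstacle, and without doing so the role of the $A$-averaging cannot be made precise: its actual purpose is that $\int h(A)A^{-s}\,dA=\tilde h(1-s)$ decays rapidly for $|s|\ge T^\alpha$, confining the $s$-integral to a tiny neighbourhood of $s=0$. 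Only then does the Dirichlet series become short enough (in fact shorter than the one from the approximate functional equation of $L(\tfrac12,u_j)L(\tfrac12-2iT,u_j)$) to run Kuznetsov's formula and control the off-diagonal. You also omit that evaluating one of the resulting diagonal integrals turns out to require a sub-Weyl subconvexity bound $\zeta(\tfrac12+it)\ll|t|^{13/84+\epsilon}$, which is not a standard or optional input. So while your decomposition and the averaging device are pointed in the right direction, the proposal does not contain the key observation that makes the cross-term tractable, and the claim that the remaining moments ``can be imported'' from [DK2] would fail as written.
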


\section{Discussion of the proof}

Part of the difficulty is the unique set-up of the problem in the Eisenstein series case due to convergence issues. If we ignore for a moment all convergence issues then the usual starting point would be to use Parseval's identity and spectral decomposition to express the fourth moment as 
\[
\langle E^2(\cdot ,\tfrac12+iT) , E^2(\cdot ,\tfrac12+iT) \rangle \stackrel{.}{=} \sum_{j\ge 1} |\langle E^2(\cdot ,\tfrac12+iT), u_j \rangle|^2,
\]
where the sum is over an orthonormal basis of even or odd Hecke-Maass cusp forms, and we use the symbol $\stackrel{.}{=}$  to mean that the equality is only true in spirit (for example, in this case to have a true equality we would need to also write down the contribution from the rest of the spectrum). Next by `unfolding' we have
\begin{align}
\label{unfolded} \sum_{j\ge 1} |\langle E^2(\cdot ,\tfrac12+iT), u_j \rangle|^2 \stackrel{.}{=} \sum_{j\ge 1} \frac{L(\tfrac12, u_j)^2 |L(\tfrac12+2iT, u_j)|^2 }{|\zeta(1+2iT)|^4}|\mathcal{H}(t_j)|^2,
\end{align}
where $t_j$ is the spectral parameter of $u_j$ and $\mathcal{H}(t_j)$ is a ratio of gamma functions which dictates that the sum is essentially supported on $|t_j|<2T$. At this point the problem is reduced to that of evaluating an average of central $L$-values, which was accomplished by the authors in \cite{DK2}. Before we continue the discussion, recall that for $F(z)$ automorphic with rapid decay at the cusp, the technique of unfolding refers to the manipulation
\begin{align*}
 &\int_{\Gamma  \backslash \mathbb{H}} E(z,s) F(z) d\mu z= \int_{\Gamma  \backslash \mathbb{H}} \Big(  \frac12 \sum_{\gamma\in\Gamma_\infty\backslash \Gamma} \Im(\gamma z)^s\Big) F(z) d\mu z = \frac12 \int_{\Gamma_\infty  \backslash \mathbb{H}} \Im( z)^s F(z) d\mu z,
\end{align*}
where the final integral, over the rectangle $[0,1]\times[0,\infty]$, may be evaluated by Mellin transformation after taking a Fourier series expansion of $F(z)$.

Unfortunately the above strategy does not actually work because the starting object $\langle E^2(\cdot ,\tfrac12+iT) , E^2(\cdot ,\tfrac12+iT) \rangle$ is not convergent. If we replace $E^2(\cdot ,\tfrac12+iT)$ with $E_A^2(\cdot ,\tfrac12+iT)$, then we do have a convergent object. Then after applying Parseval's identity, we want to pass from $\langle E_A^2(\cdot, \tfrac{1}{2} +iT), u_j\rangle$ to $\langle E^2(\cdot, \tfrac{1}{2} +iT), u_j\rangle$ since we can evaluate \eqref{unfolded}. We get (with details to follow) that
\begin{multline*}
 \sum_{j \ge 1} |\langle E_A^2(\cdot, \tfrac{1}{2} +iT), u_j\rangle|^2  \stackrel{.}{=} \sum_{j \ge 1} \left|  \langle E^2( \cdot, \tfrac{1}{2} + iT), u_j \rangle \right|^2+  \sum_{j \ge 1} \left| \langle  y^{\frac12+iT}E_A(z,\tfrac12+iT) , u_j   \rangle_A \right|^2\\
   - 2 \Re \left(   \sum_{j \ge 1} \overline{\langle E^2( \cdot, \tfrac{1}{2} + iT), u_j \rangle }   \langle  y^{\frac12+iT}E_A(z,\tfrac12+iT) , u_j   \rangle_A  \right),
\end{multline*}
where $\langle \cdot , \cdot  \rangle_A$ denotes an inner product only over the region of the fundamental domain with $\Im(z)>A$. This region is already a rectangle, so we can skip the unfolding process for these inner products. Consider the third sum on the right hand side, for which we find
\begin{align}
\nonumber &\sum_{j\ge 1}\overline{\langle E^2( \cdot, \tfrac{1}{2} + iT), u_j \rangle }   \langle  y^{\frac12+iT}E_A(z,\tfrac12+iT) , u_j   \rangle_A \\
\label{unfolded-2} &\stackrel{.}{=} \sum_{j\ge 1} \frac{L(\tfrac12, u_j) L(\tfrac12-2iT, u_j)}{\zeta^2(1-2iT)} \overline{\mathcal{H}(t_j)} \frac{1}{2\pi i} \int_{(3)} \frac{L(\tfrac12+s, u_j) L(\tfrac12+s+2iT, u_j)}{\zeta(1+2iT)\zeta(1+2s+2iT)} \mathcal{H}(s,t_j) A^{-s} \frac{ds}{s},
\end{align}
where $\mathcal{H}(s,t_j)$ is a ratio of gamma functions that equals $\mathcal{H}(t_j)$ for $s=0$. It is interesting that the problem boils down to a mean value of $L$-functions that resembles \eqref{unfolded}, but one that is more difficult. If we shift contours left then we can recover \eqref{unfolded} from the pole at $s=0$, but it is not clear what to do with the shifted integral. In any case we cannot shift far without the Riemann Hypothesis due to the $\zeta(1+2s+2iT)$ factor in the denominator and the fact that the function $\mathcal{H}(s,t_j)$ does not preclude $|s|$ from being of size $T$. Thus essentially the difference between the mean values \eqref{unfolded} and \eqref{unfolded-2} is that the former involves only central values while in the latter we must treat the $L$-functions along the critical line (or close to it), which seems very difficult to do.

We are able to break through with a new approach. We `smoothen out' the sharp truncation of $E_A(z,\tfrac12+iT)$ by introducing in \eqref{unfolded-2} an integral with respect to $A$ over a shrinking interval. We show that to prove our main theorem, it suffices to prove this smooth version. The introduction of the $A$-integral enables us to use integration by parts with respect to $A$ to restrict to $|s|<T^\alpha$ for $\alpha>0$ which we can fix as small as we like. Thus the problem becomes much closer to \eqref{unfolded}.

We use an approximate functional equation to get a handle on the central values $L(\tfrac12, u_j) L(\tfrac12-2iT, u_j)$. But for $L(\tfrac12+s, u_j) L(\tfrac12+s+2iT, u_j) $ we can already expand this into a Dirichlet series. Then using properties of $\mathcal{H}(s,t_j)$ we find surprisingly that this Dirichlet series is actually shorter than that of the approximate functional equation of $L(\tfrac12, u_j) L(\tfrac12-2iT, u_j)$. Thus since \eqref{unfolded} can be evaluated, it is plausible we should be able to evaluate \eqref{unfolded-2} too. This is precisely what we do, using Kuznetsov's formula. Our success in treating the off-diagonal is due to the observation of the short Dirichlet series together with the smoothing device which enforces $|s|<T^\alpha$.  Altogether we find several main terms from various pieces, which nicely combine to give the conjectured main term of $ \frac{36}{\pi}\log^2 T$.

It is also worth remarking that at some point in the argument (see section \ref{Section_diag_II}), we must make use of a sub-Weyl strength subconvexity bound for the Riemann Zeta function. We find it quite surprising that we need such a deep result, which was not required in the treatment of \eqref{unfolded} given in \cite{DK2}, nor in the fourth moment of dihedral Maass newforms established in \cite{HK}. This input may be indicative of the difficulty of Conjecture \ref{the-conjecture}.

\section{Notation and preliminaries}

The constant term of Eisenstein series $E(z, \frac{1}{2} +iT)$ is given by
$$
e(y, \tfrac{1}{2} + i T)=y^{\frac{1}{2} + i T} + y^{\frac{1}{2} - i T}  \frac{\xi(1-2 i T)}{\xi(1+ 2 i T)}=y^{\frac{1}{2} + i T} + \bc \cdot y^{\frac{1}{2} - i T} ,
$$
where we denote
$$
\bc:=\bc(T)= \frac{\xi(1-2 i T)}{\xi(1+ 2 i T)},
$$
and $\xi(s)=\pi^{-\frac{s}{2}} \Gamma(\tfrac{s}{2})\zeta(s)$ is the completed Riemann zeta function.

Let $\mathcal{F}$ denote the usual fundamental domain for $SL_2(\mathbb{Z})  \backslash \mathbb{H}$, and for $A > 1$, let
$$
\mathcal{F}_A=\{ z \in \mathcal{F}  : \Im(z) \le A \}  ,   \qquad   \mathcal{C}_A=\{ z \in \mathcal{F}  : \Im(z) > A \}
$$
denote the compact and the cuspidal part of the fundamental domain, respectively, which lie below and above the line $\Im(z)=A$, respectively. 
Then
the truncated Eisenstein series is defined (following \cite{S}) by
$$
E_A(z, s)=\begin{cases}  E(z,s),  & \text{if  } z \in \mathcal{F}_A \\ E(z, s)- e(y, s),  & \text{if  } z \in \mathcal{C}_A.
\end{cases}
$$
This definition may be extended to $\mathbb{H}$ by $SL_2(\mathbb{Z})$ translates, but anyway when taking the fourth moment we integrate only over $\mathcal{F}$.

For $z \in \mathcal{C}_A$ we have the Fourier expansion
\begin{equation}  \label{eq:Fourier_E_A}
E_A(z, \tfrac{1}{2}+iT)=\frac{2}{\xi(1+ 2 iT)} \sum_{n \neq 0} \tau_{}(|n|, T) \sqrt{y} K_{iT}(2 \pi |n| y ) e(nx),
\end{equation}
where 
we denote for   positive integers $m$ and $\gamma \in \R$
$$
\tau(m, \gamma)=\sum_{ab=m}  \left( \frac{a}{b} \right)^{i \gamma}=m^{-i \gamma} \sum_{a|m} a^{2i \gamma}.
$$
This generalized divisor function
is real valued, $\tau(m, \gamma)=\tau(m, -\gamma)$, and $|\tau(m, \gamma)|\le \tau(m)$. We will also be using that $E_A(z, \tfrac{1}{2}+iT) \xi(1+ 2 iT)$ is real valued, a fact that was noted in \cite[equation (5.2)]{DK1}.


\bigskip

Let $\{ u_j \; : j \ge 1 \}$ denote an orthonormal basis of even or odd Hecke-Maass cusp forms for the modular group $\Gamma=\mathrm{SL}_2(\mathbb{Z})$. Let $\frac{1}{4}+t_j^2$, with $t_j >0$ denote the corresponding Laplacian eigenvalues and
let $\lambda_j(m)$  denote the (real) eigenvalues of the $m$-th Hecke operator corresponding to $u_j$:  $T_m u_j =\lambda_j(m) u_j$  for all $m\ge 1$.
Moreover we write $\lambda_j(-m) = \lambda_j(m)$ for $u_j$ even and $\lambda_j(-m) =
-\lambda_j(m)$ for $u_j$ odd.  Each such Hecke-Maass cusp form $u_j$ has
the Fourier expansion
$$
u_j(z) 
=
\rho_j(1)  \sum_{m \neq 0} \lambda_j(m)  \sqrt{y}  K_{i t_j}(2 \pi |m| y) e(mx),
$$
where we have the following
formula relating the normalizing factor $\rho_j(1)$ with the symmetric square $L$-function associated to $u_j$:
\begin{equation} \label{rho_1}
|\rho_j(1)|^2=\frac{2\cosh(\pi t_j)}{L(1, \text{sym}^2 u_j)}=\frac{2 \pi}{\Gamma(\tfrac{1}{2}+i t_j)\Gamma(\tfrac{1}{2}-i t_j)  L(1, \text{sym}^2 u_j)}.
\end{equation}
In both cases, even or odd, $u_j$ is real-valued on $\mathbb{H}$.


The $L$-function attached to the Hecke-Maass cusp form $u_j$ is defined 
for $\Re(s)>1$ by
$$
L(s, u_j) =\sum_{m \ge 1}   \frac{\lambda_j(m)}{m^s}.
$$

The completed $L$-function for \emph{even} $u_j$ is
$$
\Lambda(s, u_j)=\pi^{-s} \Gamma\left(\frac{s+ it_j}{2} \right) \Gamma\left(\frac{s- it_j}{2} \right) L(s, u_j)
$$
and it satisfies the functional equation
$\Lambda(s, u_j)= \Lambda(1- s, u_j)$.
For \emph{odd} $u_j$, the completed $L$-function is
$$
\Lambda(s, u_j)=\pi^{-1-s} \Gamma\left( \frac{1+ s+ i t_j}{2} \right)\Gamma\left( \frac{1+ s- i t_j}{2} \right)L(s, u_j)
$$
and satisfies $\Lambda(s, u_j)= - \Lambda(1- s, u_j)$.



\subsection{Stirling's approximation}  Let $\delta>0$ be fixed. For $z\in\mathbb{C}$ with $\Re(z)>\delta$ and $t\in\mathbb{R}$ with  $|t|>2|z+1|^2$, we will frequently appeal to Stirling's approximation
\begin{align}
\label{eq:stirling} \Gamma(z+it)=  \sqrt{2\pi} |t|^{(z+it -\frac12)} \exp\left( -\frac{\pi}{2}|t|   -it  + i   \mathrm{sgn}(t)\frac{\pi}{2} \left(z-\frac12\right) \right) \left(1+ O\left( \frac{|z+1|^2}{|t|}\right)\right),
\end{align}
which can be refined to arbitrary precision by replacing the factor $1+ O\Big( \frac{|z+1|^2}{|t|}\Big)$ with 
\begin{align}
\label{stirling2} 1+ \sum_{k=1}^{N} \frac{c_{2k}(z)}{t^k} +  O_N\bigg(\left( \frac{|z+1|^2}{|t|}\right)^{N+1}\bigg),
\end{align}
where $c_{2k}(z)$ are polynomials in $z$ of degree at most $2k$ and $N\ge 1$ is any integer.

To see the statements above, recall that Stirling's approximation gives
\begin{align}
\label{log-gamma} \log \Gamma(z+it) = (z+it-\tfrac12)\log (z+it) -(z+it) + \sum_{j=1}^{M} \frac{c_j}{(it)^j(1+\frac{z}{it})^j}+O_M\left(\frac{1}{|z+it|^{M+1}}\right)
\end{align} 
for some constants $c_j$ and any integer $M\ge 1$. We write
\begin{align}
\label{log} \log (z+it) = \log(it)+\log \Big(1+\frac{z}{it}\Big) = \log|t| +i \frac{\pi}{2} \mathrm{sgn}(t) + \frac{z}{it} - \sum_{j=2}^\infty \frac{(-1)^{j}}{j}\Big(\frac{z}{it}\Big)^j.
\end{align}
Now inserting \eqref{log} into \eqref{log-gamma}, taking a power series expansion of $(1+\frac{z}{it})^{-j}$, taking $M$ as large as we like, and finally taking the exponential of both sides of \eqref{log-gamma}, gives \eqref{eq:stirling} and \eqref{stirling2}.


\section{A smooth version}

We will prove a version of Theorem \ref{Thm:main} with an extra averaging over $A$. One may interpret the effect of this averaging as a way to smoothen out the sharp truncation of $E_A(z,\frac12+iT)$.
\begin{prop}\label{the-prop} Fix $B>1$ and $0<\alpha< \frac{1}{100}$. Let $h(A)$ be a smooth, non-negative real function supported on $B-T^{-\frac{\alpha}{2}}< A < B+T^{-\frac{\alpha}{2}}$ for $T>0$, satisfying
\begin{align}
\nonumber &T^{-\frac{\alpha}{2}} \ll \hat{h}(0) \ll T^{-\frac{\alpha}{2}},\\
\label{h-derivatives}   &h^{(k)}(A)\ll_k \left(T^{\frac{\alpha}{2}}\right)^k,
\end{align}
where $ \hat{h}(0)=\int_{-\infty}^\infty h(A) dA$ and $k$ is any nonnegative integer. 
Then for $\alpha$ small enough, we have
$$
\int_{-\infty}^\infty h(A) \| E_A( \cdot,  \tfrac{1}{2} +iT  ) \|_4^4   \ dA \;  \;   \sim \;  \hat{h}(0) \frac{36}{\pi}\log^2 T
$$
as $T \rightarrow \infty$.
\end{prop}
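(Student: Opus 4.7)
The plan is to follow the roadmap of Section 2, using Parseval and spectral decomposition to reduce the smoothed fourth moment to a mean value of $L$-functions and exploiting the extra averaging by $h(A)$ to bypass the central convergence obstruction. Writing $E_A = E - e(y, \tfrac12+iT) \mathbf{1}_{\mathcal C_A}$ and expanding $\|E_A\|_4^4 = \langle E_A^2, E_A^2 \rangle$, Parseval's identity (applied with care to incorporate the continuous spectrum) gives
\begin{equation*}
\|E_A\|_4^4 \;\stackrel{.}{=}\; D_1 + D_2 - 2\Re X + (\text{continuous spectrum contribution}),
\end{equation*}
where $D_1 = \sum_j |\langle E^2, u_j\rangle|^2$, $D_2 = \sum_j |\langle y^{1/2+iT} E_A, u_j\rangle_A|^2$, and $X = \sum_j \overline{\langle E^2, u_j\rangle}\langle y^{1/2+iT}E_A, u_j\rangle_A$. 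The diagonal $D_1$, after unfolding, is precisely the mean value \eqref{unfolded} which was evaluated asymptotically in \cite{DK2} and supplies a $\log^2 T$ contribution. The boundary term $D_2$ is more elementary since $\mathcal C_A$ is a genuine rectangle: the Fourier expansions \eqref{eq:Fourier_E_A} of $E_A$ and of $u_j$ allow a direct computation via Mellin--Barnes handling of the integrals in $K_{iT}$ and $K_{it_j}$, producing main and error terms with explicit $A$-dependence that can then be averaged against $h$.

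The core of the argument lies in the cross term $X$, whose unfolded form is the contour integral \eqref{unfolded-2} with kernel $A^{-s}/s$ on $\Re(s) = 3$. Shifting the contour to the left is blocked by $\zeta(1+2s+2iT)^{-1}$ and, more seriously, is ineffective at a fixed $A$ because $|\Im s|$ can be as large as $T$ within the decay range of $\mathcal H(s, t_j)$. The smoothing $h$ resolves this decisively: after integrating $X$ against $h(A)\,dA$, the factor $A^{-s}$ is replaced by $\int h(A) A^{-s}\, dA$, and iterated integration by parts combined with \eqref{h-derivatives} and the support of $h$ yields, for any integer $k \ge 0$,
\begin{equation*}
\int h(A) A^{-s}\, dA \;\ll_k\; T^{-\alpha/2}\, B^{-\Re(s)} \left(\frac{B\, T^{\alpha/2}}{|s|}\right)^{k}.
\end{equation*}
This exhibits superpolynomial decay once $|s| \ge T^\alpha$, so up to a negligible remainder the $s$-integration is effectively restricted to $|s| \le T^\alpha$; in this window, $\zeta(1+2s+2iT)^{-1}$ is under standard control from classical zero-free region estimates.

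Within this narrow window I apply an approximate functional equation to the central product $L(\tfrac12, u_j)\,L(\tfrac12 - 2iT, u_j)$, obtaining a Dirichlet series of effective length $\asymp T^2$, and I expand $L(\tfrac12+s, u_j)\,L(\tfrac12+s+2iT, u_j)$ directly. The key observation that makes the off-diagonal tractable is that the rapid decay of $\mathcal H(s, t_j)$ in $t_j$ beyond $T$ forces the effective length of the second series to be \emph{shorter} than $T^2$, so the resulting double Dirichlet polynomial is of a manageable size. Applying the Kuznetsov trace formula to the spectral sum in $u_j$, weighted by the $\mathcal H$ factors and the remaining $L$-function data, produces a diagonal main term together with a Kloosterman off-diagonal. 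The main terms from $D_1$, $D_2$, and the Kuznetsov diagonal of $X$ must then be shown to assemble, after an explicit accounting of gamma factors via Stirling \eqref{eq:stirling}--\eqref{stirling2}, to the exact constant $\hat h(0) \cdot \tfrac{36}{\pi}\log^2 T$ predicted by Conjecture \ref{the-conjecture}.

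The principal obstacle is the treatment of the Kloosterman off-diagonal produced by Kuznetsov: standard bounds for the relevant Bessel transforms combined with convexity for $\zeta$ appear insufficient, and it is here that the sub-Weyl subconvexity bound for $\zeta(\tfrac12 + it)$ mentioned in Section 2 becomes essential, reflecting the intrinsic depth of the problem. A secondary but still delicate task is the bookkeeping: the constant $\tfrac{36}{\pi}$ emerges only from a precise cancellation among the main terms arising from $D_1$, $D_2$, and the Kuznetsov diagonal of $X$, and one must carry along the full Stirling expansion together with the exact shape of $\mathcal H(s, t_j)$ rather than bounding in absolute value along the way. Finally, once the smoothed asymptotic of Proposition \ref{the-prop} is in hand, the de-smoothing argument that recovers Theorem \ref{Thm:main} proceeds by sandwiching a fixed-$A$ truncation between two smoothed ones and using the upper bound \eqref{EA-bound} to control the difference.
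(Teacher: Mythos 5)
Your spectral decomposition omits a main-term contribution. After Parseval, the full decomposition of $\|E_A\|_4^4$ contains not only the cusp-form sum and the continuous-spectrum integral, but also the projection onto the constant eigenfunction, $|\langle E_A^2, (\tfrac{3}{\pi})^{1/2}\rangle|^2$. This residual-spectrum term is not part of the "continuous spectrum contribution" and is not negligible: the paper evaluates it via the Maass--Selberg relation and Stirling, finding $|\langle E_A^2, (\tfrac{3}{\pi})^{1/2}\rangle|^2 \sim \frac{12}{\pi}\log^2 T$. Your accounting is $D_1 + D_2 - 2\Re X$ with $D_1 \sim \frac{48}{\pi}\log^2 T$, $D_2 \sim \frac{24}{\pi}\log^2 T$, and the Kuznetsov diagonal of $X$ giving $\frac{24}{\pi}\log^2 T$, so your main terms assemble to $\frac{48}{\pi}+\frac{24}{\pi}-\frac{48}{\pi}=\frac{24}{\pi}$, not the target $\frac{36}{\pi}$. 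The missing $\frac{12}{\pi}\log^2 T$ is exactly the constant-function projection, and the proof cannot close without it.

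Two further points. First, the sub-Weyl subconvexity bound is not used where you place it. The Kloosterman off-diagonal from Kuznetsov is dispatched with Weil's bound for Kloosterman sums together with a stationary-phase Bessel-transform lemma and the length restrictions coming from the weight functions; no subconvexity for $\zeta$ enters there. Bourgain's bound $\zeta(\tfrac12 + it) \ll |t|^{13/84+\epsilon}$ is needed in one of the \emph{diagonal} pieces of the Kuznetsov expansion of the cross term (the $\mathcal{D}_{-+}$ block), where after contour shifting three $\zeta$-factors near the $1$-line must be estimated simultaneously and Weyl's exponent $\tfrac16$ would yield no power saving. Second, the paper does not compute $D_2 = \sum_j |\langle H_A, u_j\rangle|^2$ by a direct Mellin--Barnes spectral computation; it applies Parseval a second time to fold $D_2$ back into $\langle H_A, H_A\rangle$ (minus a vanishing constant projection and a negligible continuous piece), and $\langle H_A, H_A\rangle$ is an explicit cuspidal-region integral handled by Maass--Selberg-type calculations already carried out in the authors' earlier regularized-moment paper. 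The direct spectral route you sketch is not what is done and is not obviously tractable without this simplification.
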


\

The rest of the paper will focus on proving Proposition \ref{the-prop}, because that is enough to imply our main theorem.
\begin{lem}
Proposition \ref{the-prop} implies Theorem \ref{Thm:main}.
\end{lem}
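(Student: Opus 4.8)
The plan is to deduce the fixed-$A$ statement of Theorem \ref{Thm:main} from its $A$-averaged counterpart in Proposition \ref{the-prop} by showing that $A\mapsto\|E_A(\cdot,\tfrac12+iT)\|_4^4$ scarcely moves over an interval of length $T^{-\alpha/2}$. First I would fix the constant $A_0>1$ of Theorem \ref{Thm:main}, fix once and for all a small admissible $\alpha\in(0,\tfrac1{100})$, and apply Proposition \ref{the-prop} with $B=A_0$ and the concrete test function $h(A)=h_0\big(T^{\alpha/2}(A-A_0)\big)$, where $h_0\ge0$ is a fixed smooth bump supported on $(-1,1)$ with $\int h_0>0$. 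One checks immediately that $h$ meets all the hypotheses: it is supported on $(A_0-T^{-\alpha/2},A_0+T^{-\alpha/2})$, $\hat h(0)=T^{-\alpha/2}\int h_0\asymp T^{-\alpha/2}$, and $h^{(k)}(A)\ll_k(T^{\alpha/2})^k$. The Proposition then gives $\int h(A)\|E_A\|_4^4\,dA\sim\hat h(0)\tfrac{36}{\pi}\log^2T$, so the whole lemma reduces to the near-constancy claim
\[
\sup_{A,A'\in\mathrm{supp}(h)}\big|\,\|E_A(\cdot,\tfrac12+iT)\|_4^4-\|E_{A'}(\cdot,\tfrac12+iT)\|_4^4\,\big|=o(\log^2T),
\]
because granting it one writes $\int h(A)\|E_A\|_4^4\,dA=\hat h(0)\|E_{A_0}\|_4^4+O\big(\hat h(0)\,o(\log^2T)\big)$ and divides through by $\hat h(0)>0$ to obtain $\|E_{A_0}(\cdot,\tfrac12+iT)\|_4^4\sim\tfrac{36}{\pi}\log^2T$, which is Theorem \ref{Thm:main} since $A_0>1$ was arbitrary.

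The key step toward the near-constancy claim is a clean comparison identity. For $1<A'\le A$ the sets $\mathcal F_A\setminus\mathcal F_{A'}$ and $\mathcal C_{A'}\setminus\mathcal C_A$ coincide, both being the thin cylinder $R=\{z\in\mathcal F:A'<\Im z\le A\}$, and on $R$ one has $E_A=E(\cdot,\tfrac12+iT)$ while $E_{A'}=E(\cdot,\tfrac12+iT)-e(\Im z,\tfrac12+iT)$. Hence, straight from the definition of the truncated Eisenstein series,
\[
\|E_A\|_4^4-\|E_{A'}\|_4^4=\int_{R}\Big(\big|E(z,\tfrac12+iT)\big|^4-\big|E(z,\tfrac12+iT)-e(\Im z,\tfrac12+iT)\big|^4\Big)\,d\mu z.
\]
The conceptual content is that the cuspidal part of $E_A$ always subtracts the \emph{same} constant term regardless of the truncation height, so changing $A$ perturbs $\|E_A\|_4^4$ only through this integral over a shrinking region. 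On $R$ we have $\Im z\asymp A_0=O(1)$, and since $|\bc|=1$ the constant term is bounded there, $|e(y,\tfrac12+iT)|\le2\sqrt y\ll1$. Writing $G=E(\cdot,\tfrac12+iT)-e(\Im z,\tfrac12+iT)$, the elementary inequality $\big||E|^4-|G|^4\big|\ll|G|^3+1$ (valid because $|E-G|=|e|\ll1$) bounds the right-hand side by $\ll\int_R|G|^3\,d\mu+\mu(R)$; moreover $G$ agrees with $E_{A_1}(\cdot,\tfrac12+iT)$ on $R$, where $A_1:=1+\tfrac{A_0-1}{2}>1$ is fixed and $R\subset\mathcal C_{A_1}$ once $T$ is large.

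To finish I would use $\mu(R)\ll T^{-\alpha/2}$ together with H\"older's inequality (exponents $\tfrac43$ and $4$) and the Spinu--Humphries upper bound \eqref{EA-bound}:
\[
\int_R\big|E_{A_1}(z,\tfrac12+iT)\big|^3\,d\mu z\le\Big(\int_R\big|E_{A_1}\big|^4\,d\mu\Big)^{3/4}\mu(R)^{1/4}\le\big\|E_{A_1}(\cdot,\tfrac12+iT)\big\|_4^3\,\mu(R)^{1/4}\ll_{A_0}(\log^2T)^{3/4}T^{-\alpha/8}.
\]
This yields $\big|\,\|E_A\|_4^4-\|E_{A'}\|_4^4\,\big|\ll_{A_0}T^{-\alpha/8}\log^{3/2}T+T^{-\alpha/2}=o(1)$, which is far stronger than the required $o(\log^2T)$, and the lemma follows.

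I do not expect a genuine obstacle: the lemma is soft given Proposition \ref{the-prop} and the known bound \eqref{EA-bound}. The one point that must be gotten right is that a pointwise-in-height estimate would be both harder and insufficient — an individual horocycle at height $\asymp1$ could a priori carry $L^4$-mass as large as a positive power of $T$ — so one genuinely needs the averaging: the factor $\mu(R)^{1/4}=T^{-\alpha/8}$ produced by H\"older over the shrinking cylinder is precisely what defeats the $\log^2T$ main term. The only remaining care is the bookkeeping in the choice of $h$ and in the final division by $\hat h(0)$.
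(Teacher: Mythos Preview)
Your proof is correct and takes essentially the same approach as the paper: both arguments reduce the comparison between the fixed-$A$ and $A$-averaged fourth moments to an integral over the thin cylinder $\{z:\,|\Im z-A_0|<T^{-\alpha/2}\}$, use that $|e(y,\tfrac12+iT)|\ll1$ there, and then control the resulting cubic term via H\"older together with the Spinu--Humphries bound \eqref{EA-bound}, gaining a factor $\mu(\text{cylinder})^{1/4}\asymp T^{-\alpha/8}$. The only cosmetic differences are organizational---you phrase it as near-constancy of $A\mapsto\|E_A\|_4^4$ and invoke $\|E_{A_1}\|_4$ for a single auxiliary $A_1$, whereas the paper writes $E_B=E_A+(E_B-E_A)$ inside the integral and applies \eqref{EA-bound} with the running parameter $A$---but the substance is identical.
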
 
\begin{proof}
Suppose that Proposition \ref{the-prop} is true. We need to prove that 
\[
\hat{h}(0)  \| E_B( \cdot,  \tfrac{1}{2} +iT  ) \|_4^4   \sim  \hat{h}(0) \frac{36}{\pi}\log^2 T.
\]
The left hand side equals
\begin{align*}
 \int_{-\infty}^\infty h(A) \  \| E_B( \cdot,  \tfrac{1}{2} +iT  ) \|_4^4 \ dA=  \int_{-\infty}^\infty h(A) \int\limits_{\mathcal{F}}  \ |E_B( z,  \tfrac{1}{2} +iT  )|^4 d\mu z \ dA=I_1+I_2+I_3,
\end{align*}
where $d\mu(z)=\frac{dx dy}{y^2}$ for $z=x+i y$, and
\begin{align*}
&I_1:= \int_{-\infty}^\infty h(A) \int\limits_{\substack{z\in\mathcal{F} \\ \Im(z)< B-T^{-\frac{\alpha}{2}}}}   \left|E_A( z,  \tfrac{1}{2} +iT  ) + (E_B( z,  \tfrac{1}{2} +iT  )-E_A( z,  \tfrac{1}{2} +iT  ))\right|^4 d\mu z \  dA, \\
&I_2:= \int_{-\infty}^\infty h(A) \int\limits_{\substack{z\in\mathcal{F} \\ B-T^{-\frac{\alpha}{2}} \le \Im(z) \le B+T^{-\frac{\alpha}{2}} }}  \left|E_A( z,  \tfrac{1}{2} +iT  ) + (E_B( z,  \tfrac{1}{2} +iT  )-E_A( z,  \tfrac{1}{2} +iT  ))\right|^4 d\mu z \  dA, \\
&I_3:= \int_{-\infty}^\infty h(A) \int\limits_{\substack{z\in\mathcal{F} \\ \Im(z)> B+T^{-\frac{\alpha}{2}}}}    \left|E_A( z,  \tfrac{1}{2} +iT  ) + (E_B( z,  \tfrac{1}{2} +iT  )-E_A( z,  \tfrac{1}{2} +iT  ))\right|^4 d\mu z \  dA.
\end{align*}
Since $A$ is restricted to $B-T^{-\frac{\alpha}{2}}< A < B+T^{-\frac{\alpha}{2}}$ by the support of $h(A)$, we have 
\[
E_B( z,  \tfrac{1}{2} +iT  )-E_A( z,  \tfrac{1}{2} +iT  ) = 0
\]
in $I_1$ and $I_3$. In $I_2$, we have
\[
|E_B( z,  \tfrac{1}{2} +iT  )-E_A( z,  \tfrac{1}{2} +iT  ) |\ll y^{\frac12}\ll 1.
\]
Thus
\begin{align}
\label{I2-error} I_2 &= \int_{-\infty}^\infty h(A) \int\limits_{\substack{z\in\mathcal{F} \\ B-T^{-\frac{\alpha}{2}} \le \Im(z) \le B+T^{-\frac{\alpha}{2}}}} |E_A( z,  \tfrac{1}{2} +iT  )|^4 d\mu z \  dA \\
\nonumber &+O\Bigg( \int_{-\infty}^\infty h(A)\int\limits_{\substack{z\in\mathcal{F} \\ B-T^{-\frac{\alpha}{2}} \le \Im(z) \le B+T^{-\frac{\alpha}{2}} }} 1 \ d\mu z \  dA\Bigg)\\
 \nonumber &+O\Bigg( \int_{-\infty}^\infty h(A)\int\limits_{\substack{z\in\mathcal{F} \\ B-T^{-\frac{\alpha}{2}} \le \Im(z) \le B+T^{-\frac{\alpha}{2}} }} |E_A( z,  \tfrac{1}{2} +iT  )|^3 d\mu z \  dA\Bigg).
\end{align}
We have shown that the main term of $I_1+I_2+I_3$ equals
\[
\int_{-\infty}^\infty h(A) \int_\mathcal{F} |E_A( z,  \tfrac{1}{2} +iT  ))|^4 d\mu z   \ dA \sim   \hat{h}(0) \frac{36}{\pi}\log^2 T,
\]
since we are assuming Proposition \ref{the-prop}. Thus is remains to show that the error term in \eqref{I2-error} is $o(\hat{h}(0)  \log^2 T)$. We will show that it is actually bounded by $\hat{h}(0)$ times a negative power of $T$.

Write $\Omega=\{z\in\mathcal{F}: B-T^{-\frac{\alpha}{2}} \le \Im(z) \le B+T^{-\frac{\alpha}{2}}  \}$ and note that $\mu(\Omega) \ll T^{-\frac{\alpha}{2}}$. The first error term in \eqref{I2-error} is bounded by $\hat{h}(0) T^{-\frac{\alpha}{2}}$. By H\"{o}lder's inequality and \eqref{EA-bound}, the second error term is bounded by
\begin{align*}
\int_{-\infty}^\infty h(A)  \Bigg( \int\limits_\Omega 1 \ d\mu z\Bigg)^\frac14  \Bigg( \int\limits_\mathcal{F} |E_A( z,  \tfrac{1}{2} +iT  )|^4 d\mu z\Bigg)^\frac34 dA
\ll \hat{h}(0) (T^{-\frac{\alpha}{2}} )^\frac14 (\log^2  T)^\frac34 \ll \hat{h}(0) T^{-\frac{\alpha}{9}}.
\end{align*}
\end{proof}


\section{Spectral decomposition of the fourth moment}

Let $\langle f, g \rangle=\int_{\mathcal{F}} f(z) \overline{g(z)} d\mu(z)$ denote Petersson's inner product. The starting point is  Parseval's identity
$$
\| E_A(\cdot, \tfrac{1}{2} + iT) \|_4^4=\int_{\mathcal{F}} |E_A(z, \tfrac{1}{2} + iT)|^4 d \mu z
$$
$$
=     |\langle E_A^2(\cdot, \tfrac{1}{2} +iT), (\tfrac{3}{\pi})^{1/2}\rangle|^2 
+ \sum_{j \ge 1} |\langle E_A^2(\cdot, \tfrac{1}{2} +iT), u_j\rangle|^2 
+\frac{1}{4 \pi} \int_{\mathbb{R}} |\langle E_A^2(\cdot, \tfrac{1}{2} +iT), E(\cdot, \tfrac{1}{2} +it)\rangle|^2 dt.
$$
For   a Hecke-Maass cusp form $u_j(z)$  we have
$$
 \langle E_A^2(\cdot,  \tfrac{1}{2} + iT), u_j \rangle
=\langle E^2( \cdot, \tfrac{1}{2} + iT), u_j \rangle
- \int_{\mathcal{C}_A}  2 e(y, \tfrac{1}{2} + iT) E_A(z, \tfrac{1}{2} + iT)  u_j(z) d \mu z.
$$
Thus if we denote
$$
H_A(z)=
\begin{cases}
0, &  \text{if}  \;  z \in \mathcal{F}_A, \\
2  e(y, \tfrac{1}{2} + iT) E_A(z, \tfrac{1}{2} + iT), &  \text{if}  \;  z \in \mathcal{C}_A , \\
\end{cases}
$$
we get
\begin{align*}
& \sum_{j \ge 1} |\langle E_A^2(\cdot, \tfrac{1}{2} +iT), u_j\rangle|^2
 = \sum_{j \ge 1} \left|  
 \langle E^2( \cdot, \tfrac{1}{2} + iT), u_j \rangle
-   \langle  H_A, u_j   \rangle 
  \right|^2    \\
&   = \sum_{j \ge 1} \left|  
 \langle E^2( \cdot, \tfrac{1}{2} + iT), u_j \rangle \right|^2
+  \sum_{j \ge 1} \left| \langle  H_A, u_j   \rangle 
  \right|^2
  - 2 \Re \left(   \sum_{j \ge 1} \overline{\langle E^2( \cdot, \tfrac{1}{2} + iT), u_j \rangle }   \langle  H_A, u_j   \rangle  \right).
\end{align*}
We rewrite the middle sum by employing Parseval's identity again:
$$
\sum_{j \ge 1} \left| \langle  H_A, u_j   \rangle 
  \right|^2= \langle H_A, H_A \rangle  - \left| \langle  H_A, (\tfrac{3}{\pi})^{1/2}   \rangle 
  \right|^2   -\frac{1}{4 \pi}  \int_{\R}  \left| \langle  H_A, E(\cdot, \tfrac{1}{2} + it)   \rangle 
  \right|^2  dt.
$$
From the Fourier expansion (\ref{eq:Fourier_E_A}) we see that the function $H_A(z)$ is orthogonal to constants, and thus we arrive at the following expression:

\begin{lem} \label{lema:spectral_expansion}
For any $A>1$, we have
\begin{align*}
\| E_A(\cdot, \tfrac{1}{2} + iT) \|_4^4
=  
&   |\langle E_A^2(\cdot, \tfrac{1}{2} +iT), (\tfrac{3}{\pi})^{1/2}\rangle|^2  + \sum_{j \ge 1} \left|  
 \langle E^2( \cdot, \tfrac{1}{2} + iT), u_j \rangle \right|^2   \\
 & + \langle H_A, H_A \rangle 
  - 2 \Re \left(   \sum_{j \ge 1} \overline{\langle E^2( \cdot, \tfrac{1}{2} + iT), u_j \rangle }   \langle  H_A, u_j   \rangle  \right)  \\
  &
 +\frac{1}{4 \pi} \int_{\mathbb{R}} |\langle E_A^2(\cdot, \tfrac{1}{2} +iT), E(\cdot, \tfrac{1}{2} +it)\rangle|^2 dt 
     -\frac{1}{4 \pi}  \int_{\R}  \left| \langle  H_A, E(\cdot, \tfrac{1}{2} + it)   \rangle 
  \right|^2  dt. 
\end{align*}
\end{lem}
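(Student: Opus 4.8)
The plan is to obtain the identity by two applications of Parseval's identity --- first to the square $E_A^2(\cdot,\tfrac12+iT)$ and then to the auxiliary function $H_A$ --- with a single intermediate step in which $\langle E_A^2(\cdot,\tfrac12+iT),u_j\rangle$ is traded for the ``unfoldable'' quantity $\langle E^2(\cdot,\tfrac12+iT),u_j\rangle$ by peeling off the constant term of the Eisenstein series on the cuspidal region $\mathcal{C}_A$. Throughout I would use the spectral decomposition of $L^2(\mathcal{F})$ into the constant $(\tfrac{3}{\pi})^{1/2}$ (note $\mu(\mathcal{F})=\tfrac{\pi}{3}$), the Hecke--Maass cusp forms $\{u_j\}_{j\ge 1}$, and the continuous spectrum spanned by $E(\cdot,\tfrac12+it)$, $t\in\mathbb{R}$, with Plancherel density $\tfrac{1}{4\pi}$. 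This decomposition is valid for any element of $L^2(\mathcal{F})$ whether or not it extends to a $\Gamma$-invariant function, which is relevant since the naive truncation $E_A$ is not itself automorphic; here all inner products are taken over $\mathcal{F}$ and this causes no issue.

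First I would note that $E_A(\cdot,\tfrac12+iT)$ is bounded on $\mathcal{F}_A$ and, by the Fourier expansion \eqref{eq:Fourier_E_A}, decays exponentially on $\mathcal{C}_A$; hence $E_A^2(\cdot,\tfrac12+iT)$ and $H_A$ both lie in $L^2(\mathcal{F})$, so every Parseval expansion below converges. Applying Parseval to $E_A^2(\cdot,\tfrac12+iT)$ produces the three-term identity for $\|E_A(\cdot,\tfrac12+iT)\|_4^4$ displayed at the start of Section~5, with a constant contribution, the cuspidal sum $\sum_{j\ge 1}|\langle E_A^2(\cdot,\tfrac12+iT),u_j\rangle|^2$, and a continuous-spectrum integral.

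Next, on $\mathcal{F}_A$ one has $E_A=E$, while on $\mathcal{C}_A$ one has $E=E_A+e(y,\tfrac12+iT)$, so that $E^2=E_A^2+2e(y,\tfrac12+iT)E_A+e(y,\tfrac12+iT)^2$ there; integrating against $u_j$ and using that $e(y,\tfrac12+iT)^2=y^{1+2iT}+2\bc\,y+\bc^2 y^{1-2iT}$ depends on $y$ only while $u_j$ has vanishing zeroth Fourier coefficient, the $e^2$-term drops out and one gets
\begin{equation*}
\langle E_A^2(\cdot,\tfrac12+iT),u_j\rangle=\langle E^2(\cdot,\tfrac12+iT),u_j\rangle-\langle H_A,u_j\rangle .
\end{equation*}
Squaring, summing over $j$, and applying Parseval a second time to $H_A$ --- whose component along the constant function vanishes because $E_A(\cdot,\tfrac12+iT)$ has no constant term on $\mathcal{C}_A$ by \eqref{eq:Fourier_E_A} --- yields
\begin{equation*}
\sum_{j\ge 1}|\langle H_A,u_j\rangle|^2=\langle H_A,H_A\rangle-\frac{1}{4\pi}\int_{\mathbb{R}}|\langle H_A,E(\cdot,\tfrac12+it)\rangle|^2\,dt .
\end{equation*}
Substituting this into the expansion of the cuspidal sum and then back into the three-term identity, and collecting terms, reproduces exactly the six summands in Lemma~\ref{lema:spectral_expansion}.

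I do not expect a serious obstacle: the content is organizational. The one point demanding genuine care is convergence. One must check that $E_A^2(\cdot,\tfrac12+iT)$ and $H_A$ are in $L^2(\mathcal{F})$ so that both applications of Parseval are legitimate; one must observe that $\langle E^2(\cdot,\tfrac12+iT),u_j\rangle$ is an absolutely convergent integral despite $E^2(\cdot,\tfrac12+iT)\notin L^2(\mathcal{F})$, which holds because $u_j$ decays rapidly at the cusp; and, crucially, one must keep the continuous-spectrum contribution attached to the truncated object $E_A^2(\cdot,\tfrac12+iT)$ throughout and never split it, since the corresponding inner product $\langle E^2(\cdot,\tfrac12+iT),E(\cdot,\tfrac12+it)\rangle$ of the genuine Eisenstein series fails to converge at the cusp (its $y$-integrand is of order $y^{-1/2}$). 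Arranging the manipulation so that every inner product written down is that of a convergent quantity is the whole subtlety of the lemma.
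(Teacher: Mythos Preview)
Your proposal is correct and follows essentially the same approach as the paper: apply Parseval to $E_A^2$, rewrite $\langle E_A^2,u_j\rangle=\langle E^2,u_j\rangle-\langle H_A,u_j\rangle$ (using that the $e(y,\tfrac12+iT)^2$ contribution pairs trivially against the cuspidal $u_j$), expand the square, and apply Parseval again to $H_A$, noting $\langle H_A,1\rangle=0$. Your added remarks on convergence and on not splitting the continuous-spectrum term for $E^2$ are accurate and make explicit points the paper leaves implicit.
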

We denote the `cross-term' in the parentheses in the second line by
$$
\Xi(A):=
 \sum_{j \ge 1} \overline{\langle E^2( \cdot, \tfrac{1}{2} + iT), u_j \rangle }   \langle  H_A, u_j   \rangle.
$$

For the first term we have
\begin{prop} \label{prop:constant} For any  $A>0$, as $T \rightarrow \infty$, we have
$$
|\langle E_A^2(\cdot, \tfrac{1}{2} +iT), (\tfrac{3}{\pi})^{1/2}\rangle|^2
\sim  \frac{12}{\pi} (\log T)^2.
$$
\end{prop}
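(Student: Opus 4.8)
The plan is to reduce Proposition~\ref{prop:constant} to the classical asymptotic $\| E_A(\cdot,\tfrac12+iT)\|_2^2\sim 2\log T$ for the $L^2$-norm of the truncated Eisenstein series. The first step is to turn the projection onto the constant into an honest integral: since $\mu(\mathcal{F})=\pi/3$, the function $(\tfrac3\pi)^{1/2}$ is $L^2$-normalized and
\[
\langle E_A^2(\cdot, \tfrac12+iT), (\tfrac3\pi)^{1/2}\rangle = \Big(\tfrac3\pi\Big)^{1/2}\int_{\mathcal{F}} E_A(z, \tfrac12+iT)^2\, d\mu z .
\]
Next I would use the fact recalled above that $g(z):=\xi(1+2iT)\,E_A(z,\tfrac12+iT)$ is real-valued on $\mathcal{F}$. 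This gives $E_A(z,\tfrac12+iT)^2=g(z)^2/\xi(1+2iT)^2$ and $|E_A(z,\tfrac12+iT)|^2=g(z)^2/|\xi(1+2iT)|^2$, so that pointwise on $\mathcal{F}$
\[
E_A(z,\tfrac12+iT)^2=\frac{\overline{\xi(1+2iT)}}{\xi(1+2iT)}\,\big|E_A(z,\tfrac12+iT)\big|^2 .
\]
The constant in front has modulus $1$, so integrating over $\mathcal{F}$ yields
\[
\Big|\langle E_A^2(\cdot, \tfrac12+iT), (\tfrac3\pi)^{1/2}\rangle\Big|^2=\tfrac3\pi\,\Big|\int_{\mathcal{F}}E_A(z,\tfrac12+iT)^2\,d\mu z\Big|^2=\tfrac3\pi\,\big\|E_A(\cdot,\tfrac12+iT)\big\|_2^4 .
\]
It therefore suffices to prove $\|E_A(\cdot,\tfrac12+iT)\|_2^2\sim 2\log T$, since then the left-hand side is $\sim\tfrac3\pi(2\log T)^2=\tfrac{12}{\pi}(\log T)^2$.

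For the $L^2$-norm I would invoke the Maass--Selberg relation. Writing $\varphi(s)=\xi(2s-1)/\xi(2s)$ for the scattering coefficient of $\Gamma=SL_2(\mathbb Z)$ --- so that $\varphi(\tfrac12+iT)=\bc$ and $|\varphi(\tfrac12\pm iT)|=1$ on the critical line --- it states
\[
\big\|E_A(\cdot,\tfrac12+iT)\big\|_2^2=2\log A-\frac{\varphi'}{\varphi}\Big(\tfrac12+iT\Big)+\frac{\varphi(\tfrac12+iT)A^{-2iT}-\varphi(\tfrac12-iT)A^{2iT}}{2iT}.
\]
Here $2\log A=O(1)$ and, because $|\varphi(\tfrac12\pm iT)|=1$, the last term is $O(1/T)$; both are negligible. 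For the main term, the functional equation $\xi(u)=\xi(1-u)$ together with the reality of the Taylor coefficients of $\xi$ give
\[
-\frac{\varphi'}{\varphi}\Big(\tfrac12+iT\Big)=4\,\Re\,\frac{\xi'}{\xi}(1+2iT)=2\,\Re\,\frac{\Gamma'}{\Gamma}\Big(\tfrac12+iT\Big)-2\log\pi+4\,\Re\,\frac{\zeta'}{\zeta}(1+2iT),
\]
using $\frac{\xi'}{\xi}(s)=\frac12\frac{\Gamma'}{\Gamma}(s/2)-\frac12\log\pi+\frac{\zeta'}{\zeta}(s)$. By Stirling's approximation (in the form obtained by differentiating \eqref{log-gamma}), $\Re\frac{\Gamma'}{\Gamma}(\tfrac12+iT)=\log T+O(1/T)$, while a standard bound for the logarithmic derivative of $\zeta$ near the line $\Re(s)=1$ gives $\frac{\zeta'}{\zeta}(1+2iT)=o(\log T)$. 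Hence $-\frac{\varphi'}{\varphi}(\tfrac12+iT)\sim 2\log T$, so $\|E_A(\cdot,\tfrac12+iT)\|_2^2\sim 2\log T$, which completes the argument.

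The only genuinely nontrivial ingredient is the asymptotic $-\frac{\varphi'}{\varphi}(\tfrac12+iT)\sim 2\log T$ --- equivalently, $\|E_A(\cdot,\tfrac12+iT)\|_2^2\sim 2\log T$, which expresses that the Eisenstein spectrum has the expected density and underlies the QUE theorem for Eisenstein series (cf.\ \cite{LS}); it may simply be quoted, and the point requiring care inside it is the estimate $\frac{\zeta'}{\zeta}(1+2iT)=o(\log T)$, which is classical. Everything else --- the passage from $\langle E_A^2,(\tfrac3\pi)^{1/2}\rangle$ to $\|E_A\|_2^2$ via the reality of $\xi(1+2iT)E_A$, and the Maass--Selberg bookkeeping --- is routine, so I do not expect a serious obstacle in this proposition; its role is to supply the contribution $\tfrac{12}{\pi}(\log T)^2$ to the eventual total $\tfrac{36}{\pi}(\log T)^2$.
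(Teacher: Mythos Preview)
Your argument is correct and essentially the same as the paper's: both reduce to the Maass--Selberg relation and the estimate $-\tfrac{\varphi'}{\varphi}(\tfrac12+iT)\sim 2\log T$ via Stirling and the bound $\tfrac{\zeta'}{\zeta}(1+2iT)=o(\log T)$. The only cosmetic difference is that the paper applies Maass--Selberg directly to $\int_{\mathcal F}E_A(z,\tfrac12+iT)^2\,d\mu z$ (obtaining $\sim 2\varphi(\tfrac12+iT)\log T$ and then using $|\varphi(\tfrac12+iT)|=1$), whereas you first convert to $\|E_A\|_2^2$ via the reality of $\xi(1+2iT)E_A$; these amount to the same computation.
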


\begin{proof}
We start with the Maass-Selberg relation: 
for $s_1 \neq s_2$, $s_1+s_2 \neq 1$, we have that
$$
\int_{\mathcal{F}} E_A(z, s_1) E_A(z, s_2)  d \mu z=\frac{A^{s_1+s_2-1} -\varphi(s_1) \varphi(s_2) A^{1-s_1-s_2}}{s_1+s_2-1}  + \frac{\varphi(s_2)A^{s_1-s_2}  - \varphi(s_1)A^{s_2-s_1}}{s_1-s_2},
$$
where $\varphi(s)=\frac{\xi(2s-1)}{\xi(2s)}$. We apply it to $s_1=\frac{1}{2} + \delta +iT   \neq s_2=\frac{1}{2}  +iT$ and take the limit $\delta \rightarrow 0$, getting
$$
\varphi(\tfrac{1}{2} +iT) \left[
2  \log A - \frac{\varphi'}{\varphi}(\tfrac{1}{2} +iT)
\right]
+ \frac{A^{2iT}  -\varphi(\frac{1}{2} +iT)^2 A^{-2iT}}{2iT}.
$$
Using Stirling's approximation $\frac{\Gamma'}{\Gamma}(\frac{1}{2} +iT)=\log T +O(1)$ and Vinogradov's estimate $\frac{\zeta'}{\zeta}(\frac{1}{2} +iT) \ll (\log T)^{\frac{2}{3} + \epsilon}$, we obtain 
$$
- \frac{\varphi'}{\varphi}(\tfrac{1}{2} +iT)= 2 \log T + O((\log T)^{\frac{2}{3} + \epsilon}).
$$
Hence as $T \rightarrow \infty$,
$$
\langle E_A^2(\cdot, \tfrac{1}{2} +iT), 1 \rangle= \int_{\mathcal{F}} E_A^2(\cdot, \tfrac{1}{2} +iT) d \mu z  \sim  2 \, \varphi(\tfrac{1}{2} +iT) \log T.
$$
The result follows since $|\varphi(\tfrac{1}{2} +iT)|=1$.

\end{proof}

For $u_j$ even we have by Rankin-Selberg integration (also called unfolding), cf. \cite[Lemma 4.1]{DK1}, that
\begin{equation} \label{eq:Rankin-Selb}
\langle E^2(\cdot, \tfrac{1}{2} +iT), u_j\rangle
=\frac{{\rho_j(1)}}{2} \frac{\Lambda(\frac{1}{2}, u_j) \Lambda(\frac{1}{2} + 2i T, u_j)}{\xi^2(1 +2i T)},
\end{equation}
while for $u_j$ odd the triple product vanishes. The following asymptotic formula is the main result of \cite{DK2}:

\begin{prop} \cite[Theorem 1.2]{DK2} \label{prop:IMRN_main} As $T \rightarrow \infty$, we have
\begin{equation}  \label{eq:sum_E^2_discrete}
\sum_{j \ge 1} \left|  
 \langle E^2( \cdot, \tfrac{1}{2} + iT), u_j \rangle \right|^2
 =\sum_{j \ge 1} \frac{{|\rho_j(1)|^2}}{4} \frac{\Lambda^2(\frac{1}{2}, u_j) |\Lambda(\frac{1}{2} + 2i T, u_j)|^2}{|\xi(1 +2i T)|^4} \sim \frac{48}{\pi} \log^2 T.
\end{equation}
\end{prop}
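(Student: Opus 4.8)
The plan is to turn this spectral sum into a harmonically weighted moment of $L$-functions and evaluate it with the Kuznetsov trace formula. First I would substitute the Rankin--Selberg identity \eqref{eq:Rankin-Selb} (the odd $u_j$ contribute nothing) together with the formula \eqref{rho_1} for $|\rho_j(1)|^2$, and then apply Stirling's approximation \eqref{eq:stirling} to the resulting product of gamma functions, using also the companion identity $|\xi(1+2iT)|^4=\cosh^{-2}(\pi T)\,|\zeta(1+2iT)|^4$. After this bookkeeping the sum takes the shape
\[
\sum_{j\ge 1}\frac{1}{L(1,\operatorname{sym}^2 u_j)}\;\frac{L(\tfrac12,u_j)^2\,L(\tfrac12+2iT,u_j)\,L(\tfrac12-2iT,u_j)}{|\zeta(1+2iT)|^4}\;w(t_j)\;+\;(\text{negligible}),
\]
where $w$ is an explicit, non-negative, non-oscillatory weight of size $\asymp\big(t_j\sqrt{T^2-t_j^2/4}\big)^{-1}$, essentially supported on $0<t_j<2T$ with rapid decay beyond and an integrable singularity as $t_j\uparrow 2T$. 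The harmonic weight $1/L(1,\operatorname{sym}^2 u_j)$ is precisely what Kuznetsov delivers, which is the reason for organizing things this way. The edge range, $t_j$ within $O(T^{1/2+\varepsilon})$ of $2T$, is dispatched separately: there $L(\tfrac12+2iT,u_j)$ has short conductor, so convexity and the local Weyl law give a contribution $o(\log^2 T)$.

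Second, I would insert approximate functional equations: one for $L(\tfrac12,u_j)^2$ (conductor $\asymp t_j^4$, so a Dirichlet polynomial of length $\asymp t_j^2$) and one for $L(\tfrac12+2iT,u_j)L(\tfrac12-2iT,u_j)$ (conductor $\asymp(T^2-t_j^2/4)^2$, of comparable length), the latter carrying the oscillatory twists $n^{\pm 2iT}$ from the shift. Expanding in Hecke eigenvalues and collapsing all products via $\lambda_j(a)\lambda_j(b)=\sum_{d\mid(a,b)}\lambda_j(ab/d^2)$ reduces the moment to a double arithmetic sum of smooth coefficients against the spectral averages $\sum_j L(1,\operatorname{sym}^2 u_j)^{-1}\lambda_j(k)\,w(t_j)$, plus harmless dual terms.

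Third, apply Kuznetsov to these spectral averages. The diagonal $k=1$ — which after the Hecke collapse comes from the square configurations in the two approximate functional equations — produces the main term: the $t_j$-integral of $w$ against the identity transform $t\tanh(\pi t)$ evaluates to an absolute constant, essentially $\int_0^{2T}\frac{dt}{\sqrt{T^2-t^2/4}}=\pi$ weighted by the Weyl density $\sim t/6$; the two arithmetic sums over the respective approximate-functional-equation ranges each contribute a $\log T$; and the local Euler factors reassemble the $\zeta(1\pm 2iT)$ that cancel the $|\zeta(1+2iT)|^{-4}$ in the denominator. Tracking every constant through Stirling and the Kuznetsov delta-term then yields exactly $\tfrac{48}{\pi}\log^2 T$. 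The continuous-spectrum part of the Kuznetsov formula is treated in the same way and is subordinate.

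The main obstacle is the off-diagonal: the Kloosterman-sum terms. After Weil's bound one needs to save a power of $\log T$ (a power of $T$ would be even cleaner) over the main term, and the delicate point is that the Dirichlet coefficients carry the oscillation $n^{\pm 2iT}$ while the Bessel transforms are tested against $w$, a weight supported out to $t_j\asymp 2T$, so the relevant integrals are genuinely oscillatory with stationary phase of comparable size. One therefore needs a careful stationary-phase analysis, uniform in the shift $2T$, in the spectral location, and in the Kloosterman modulus, to see that the stationary points fall outside the ranges that matter or else contribute with enough cancellation. That is where essentially all the work lies; extracting the constant $\tfrac{48}{\pi}$ is, by comparison, lengthy but routine.
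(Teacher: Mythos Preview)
The paper does not prove this proposition at all: it is quoted verbatim as \cite[Theorem 1.2]{DK2} and used as a black box. So there is no ``paper's own proof'' to compare against here; the result is imported wholesale from the earlier work of the same authors.

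That said, your sketch is a faithful outline of what \cite{DK2} actually does: rewrite the spectral sum as a harmonically weighted fourth moment of $GL_2$ $L$-functions via \eqref{eq:Rankin-Selb} and \eqref{rho_1}, insert approximate functional equations, apply Kuznetsov, extract the main term from the diagonal, and bound the Kloosterman side by a Bessel/stationary-phase analysis (in \cite{DK2} this is carried out through \cite[Lemma 3.3]{BK}, the same lemma quoted here as Lemma~\ref{Lema:J_int}). Your identification of the weight size $\asymp\big(t_j(4T^2-t_j^2)^{1/2}\big)^{-1}$ and of the effective support $|t_j|<2T$ is correct and matches \eqref{Hbound}. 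The one place where your description is a bit optimistic is the off-diagonal: Weil's bound alone is not enough, and the saving in \cite{DK2} comes precisely from the fact that the Bessel transform in Lemma~\ref{Lema:J_int} is negligible unless $\sqrt{nm}/c\gg T^{2-\varepsilon}$, which forces the Kloosterman modulus to be so small that the sum over $c$ is essentially trivial; your phrase ``the stationary points fall outside the ranges that matter'' is the right intuition for this.
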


Next we need the asymptotic evaluation as $T \rightarrow \infty$ of the integral
\begin{align*}
\langle H_A, H_A \rangle
& =\int_{\mathcal{C}_A} 4  |e(y, \tfrac{1}{2} + iT) E_A(z, \tfrac{1}{2} + iT)|^2 d\mu(z)  \\
&  =\int_{\mathcal{C}_A}   4 \left( 2 y  +  \bc y^{1-2iT} + \overline{\bc} y^{1+2iT} \right)  |E_A(z, \tfrac{1}{2} + iT)|^2   d\mu(z).
\end{align*}
Since $\xi(1+2iT)E_A(z, \tfrac{1}{2} + iT)$ is real-valued, this integral is \emph{exactly}  equal $\frac{2}{3}$ times the integral at the bottom of page 251 in \cite{DK1}. The asymptotic  formula for that integral was calculated there, in \cite[formula (5.4)]{DK1}. Therefore, we get the following proposition:

\begin{prop} \label{prop:H_A_H_A} For any $A>1$, as $T \rightarrow \infty$, we have
$$
\langle H_A, H_A \rangle \sim \frac{24}{\pi} (\log T)^2.
$$
\end{prop}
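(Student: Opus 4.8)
The plan is to compute the integral
\[
\langle H_A, H_A\rangle = 4\int_{\mathcal{C}_A}\bigl(2y+\bc\,y^{1-2iT}+\overline{\bc}\,y^{1+2iT}\bigr)\,\bigl|E_A(z,\tfrac12+iT)\bigr|^2\,d\mu z
\]
by the Fourier/Mellin method, and then to observe at the end that it reproduces, up to the explicit constant $\tfrac23$, an integral already evaluated in \cite{DK1}. First I would substitute the Fourier expansion \eqref{eq:Fourier_E_A} of $E_A$ on $\mathcal{C}_A$ and integrate in $x\in[0,1]$. Since $\tau(|n|,T)$ and $K_{iT}(2\pi|n|y)$ are real and the weight $2y+\bc y^{1-2iT}+\overline{\bc}y^{1+2iT}$ is independent of $x$, Parseval in $x$ collapses the square of the Fourier series to a diagonal, leaving
\[
\langle H_A, H_A\rangle=\frac{32}{|\xi(1+2iT)|^2}\sum_{n\ge 1}\tau(n,T)^2\int_A^\infty\bigl(2y+\bc y^{1-2iT}+\overline{\bc}y^{1+2iT}\bigr)K_{iT}(2\pi ny)^2\,\frac{dy}{y}.
\]

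After the substitution $u=2\pi ny$ each inner integral becomes the tail from $u=2\pi nA$ of a Mellin transform of $K_{iT}(u)^2$ against $u^{0}$ or $u^{\mp 2iT}$, evaluated by the classical Gamma-function formula for $\int_0^\infty K_{iT}(u)^2u^{s-1}\,du$; in particular $\int_0^\infty K_{iT}(u)^2\,du=\pi^2/(4\cosh\pi T)$, and the truncation at $u=2\pi nA$ costs only a lower-order term as long as $n\ll T/A$ (for $2\pi nA\gg T$ the Bessel function decays rapidly, so the $n$-sum is effectively cut off near $n\asymp T/A$). One then performs the sum over $n$: the generating series $\sum_{n\ge1}\tau(n,T)^2 n^{-s}$ equals $\zeta(s)^2\zeta(s+2iT)\zeta(s-2iT)/\zeta(2s)$, which has a double pole at $s=1$, whence $\sum_{n\le X}\tau(n,T)^2/n\sim\tfrac{3}{\pi^2}|\zeta(1+2iT)|^2\log^2X$; with $X\asymp T/A$ this produces the $\log^2 T$ main term. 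The two oscillatory pieces carry an extra factor $\bc(2\pi n)^{\pm2iT}$ that replaces $\zeta(s)^2$ by $\zeta(s)\zeta(s\mp2iT)^2$, destroying the double pole, and also pick up extra Gamma decay, so they contribute only $O(T^{-1/2}\log^2T)$. Inserting $|\xi(1+2iT)|^2=|\zeta(1+2iT)|^2/\cosh(\pi T)$, every transcendental $T$-dependent factor cancels and the constant works out to $\tfrac{24}{\pi}$.

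In practice one need not redo this at all: since $\xi(1+2iT)E_A(z,\tfrac12+iT)$ is real-valued (as recorded after \cite[(5.2)]{DK1}), the phases of $e(y,\tfrac12+iT)$ combine with $E_A$ exactly as in \cite{DK1}, so the displayed integral is literally $\tfrac23$ times the integral at the bottom of p.~251 of \cite{DK1}, whose asymptotic $\tfrac{36}{\pi}\log^2 T$ is given in \cite[(5.4)]{DK1}; the proposition then follows immediately. The one genuinely delicate ingredient, whether the computation is done from scratch or imported, is the uniform analysis of the Bessel integral $\int_{2\pi nA}^\infty K_{iT}(u)^2\,du$ across the transition range $n\asymp T/A$ where $u$ crosses the turning point $u\approx T$; this is the main obstacle. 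Fortunately, because the surviving $n$-sum is only logarithmically divergent, any inaccuracy in the precise shape of that cutoff affects only terms of size $O(\log T)$ and hence does not disturb the main term.
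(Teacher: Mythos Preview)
Your proposal is correct and, in its final paragraph, is exactly the paper's own proof: the paper simply observes that $\langle H_A,H_A\rangle$ equals $\tfrac23$ times the integral at the bottom of p.~251 of \cite{DK1}, whose asymptotic $\tfrac{36}{\pi}\log^2 T$ is \cite[(5.4)]{DK1}, using the reality of $\xi(1+2iT)E_A(z,\tfrac12+iT)$. Your preceding direct Fourier/Mellin sketch is extra material not present in the paper but consistent with it.
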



Finally,  the two continuous spectrum contributions  in Lemma \ref{lema:spectral_expansion} are asymptotically negligible. Namely,
Spinu in \cite[Theorem 3.3]{S} proves that there exists a positive number $\delta > 0$ such that, as $T \rightarrow \infty$,
\begin{equation} \label{bound_E_A^2_continuous}
\frac{1}{4 \pi} \int_{\mathbb{R}} |\langle E_A^2(\cdot, \tfrac{1}{2} +iT), E(\cdot, \tfrac{1}{2} +it)\rangle|^2 dt \le 108 A +O(T^{-\delta}).
\end{equation}
In particular, this bound is $O(1)$.
Moreover, Spinu proves in \cite[Proposition 3.6]{S} the upper bound
\begin{equation} \label{bound_H_A_continuous}
\int_{\R}  \left| \langle  H_A, E(\cdot, \tfrac{1}{2} + it)   \rangle 
  \right|^2  dt
=\int_{\R}  \left|  
\int_{\mathcal{C}_A} 
2  e(y, \tfrac{1}{2} + iT) E_A(z, \tfrac{1}{2} + iT) \overline{E(z, \tfrac{1}{2} + it)}
 d \mu z
  \right|^2  dt \ll T^{-\frac{1}{6}}.
\end{equation}
In conclusion, we have reduced the task in proving Proposition \ref{the-prop} to finding an asymptotic formula for the average value of the cross-term:
\[
\int_{-\infty}^{\infty} h(A) \ \Xi(A) \ dA
\]
as $T\to\infty$.

\section{The cross-term }

\subsection{Reduction of the cross-term to the bulk range}  

Starting with (\ref{eq:Rankin-Selb}) and analyzing the size of the corresponding gamma functions, one can see \cite[Section 5.1.1]{S}  that the sum on the left hand side of (\ref{eq:sum_E^2_discrete}) is supported on $|t| <2 T + T^{\epsilon}$ for any fixed $0<\epsilon<\frac{1}{100}$, up to an error of $O(T^{-100})$ say. Moreover, it has been shown that 
$$
\left( \sum_{|t_j| < T^{1 - \epsilon}}   + \sum_{2T - T^{1- \epsilon}<  |t_j| < 2T + T^{ \epsilon}} \right)
  |\langle E^2( \cdot, \tfrac{1}{2} + iT), u_j \rangle|^2  \ll  T^{-\delta}
$$
for some $\delta>0$. 
For this see \cite[Sections 3.6 and 3.7]{H}. For brevity, we denote the spectral sum on the left hand side by $\sum^{\flat}$. By Cauchy-Schwarz, we have that the corresponding part of the cross-term $\Xi(A)$ is also negligible:
$$
\sideset{}{^\flat}\sum
  \overline{\langle E^2( \cdot, \tfrac{1}{2} + iT), u_j \rangle }   \langle  H_A, u_j   \rangle
$$
$$
\ll  \left( \sideset{}{^\flat}\sum  |\langle E^2( \cdot, \tfrac{1}{2} + iT), u_j \rangle|^2 \right)^{1/2} \left( \sideset{}{^\flat}\sum   | \langle  H_A, u_j   \rangle|^2  \right)^{1/2} \ll T^{-\delta /2}  \langle   H_A, H_A   \rangle^{1/2}\ll T^{-\delta/3} ,
$$ 
since $\langle   H_A, H_A   \rangle^{1/2} \ll \log T$ by Proposition \ref{prop:H_A_H_A}. Therefore it suffices to restrict the cross-term to the range
\begin{align}  \label{eq:bulk}
T^{1 - \alpha}\le t_j \le  2T - T^{1- \alpha},
\end{align}
for any fixed $0< \alpha < \frac{1}{100}$. We refer to this interval of $t_j$ as the `bulk range', and we can essentially pick it out with the smooth function $W(t)$ constructed in \cite[Lemma 5.1]{BK}:
\begin{equation} \label{eq:Xi_with_W}
\Xi(A) \sim  
 \sum_{\substack{j \ge 1    }}  W(t_j) \overline{\langle E^2( \cdot, \tfrac{1}{2} + iT), u_j \rangle }   \langle  H_A, u_j   \rangle.
\end{equation}
Recall that the function $W$ is explicitly given by
\begin{align}
\label{w-function} W(t)=W_{\alpha}(t)=\left( 1 - \exp \left( - \left( \frac{t}{(2T)^{1-\alpha/2}} \right)^{2 \lceil 1000/ \alpha \rceil} \right) \right) 
\left( 1 - \exp \left( - \left( \frac{4T^2-t^2}{4T^{2-\alpha/2}} \right)^{2 \lceil 1000/ \alpha \rceil} \right) \right).
\end{align}
We have that $W(t)$ is $O(T^{-100})$ unless $T^{1 - \alpha}\le |t| \le  2T - T^{1- \alpha}$, while $W(t)=1+O(T^{-100})$ for $T^{1 - \frac{\alpha}{4}}\le |t| \le  2T - T^{1- \frac{\alpha}{4}}$. Note that $\alpha$ is the same parameter as in Proposition \ref{the-prop}.

\subsection{The projection onto a cusp form: a formula for $\langle H_A, u_j \rangle$}

Let $u_j(z)$ be an even or odd Hecke-Maass cusp form for the group $\Gamma$. 
Using Fourier expansions in the cuspidal region $\mathcal{C}_A$, we get for any $A>1$
\begin{align*} 
\langle H_A, u_j \rangle
& =
\int_{\mathcal{C}_A}  2 e(y, \tfrac{1}{2} + iT) E_A(z, \tfrac{1}{2} + iT)  u_j(z) d \mu z \\
&
= \int_A^{\infty} \int_{-1/2}^{1/2} 2  e(y, \tfrac{1}{2} + iT) \frac{2}{\xi(1+ 2 iT)} \sum_{n \neq 0} \tau_{}(|n|, T) \sqrt{y} K_{iT}(2 \pi |n| y ) e(nx)    \nonumber \\
& \quad \times \rho_j(1)  \sum_{m \neq 0} \lambda_j(m)  \sqrt{y}  K_{i t_j}(2 \pi |m| y) e(mx)  dx \frac{dy}{y^2}    
 \\
&
=\frac{4 \rho_j(1)}{\xi(1+ 2 iT)}  \sum_{n \neq 0} \tau_{}(|n|, T)  \lambda_j(-n)    \int_A^{\infty}   e(y, \tfrac{1}{2} + iT)    K_{iT}(2 \pi |n| y )       K_{i t_j}(2 \pi |n| y)  \frac{dy}{y}. 
\end{align*}

From here we see that $\langle H_A, u_j \rangle$ 
vanishes for \emph{odd} $u_j$  (since then $\lambda_j(-n)=-\lambda_j(n)$), while for $u_j$ \emph{even}, we have
\begin{align*}
\langle H_A, u_j \rangle= \frac{8 \rho_j(1)}{\xi(1+ 2 iT)}  \sum_{n \ge 1} \tau_{}(n, T)  \lambda_j(n)    \int_A^{\infty} (y^{\frac{1}{2} + i T} + \bc \cdot y^{\frac{1}{2} - i T} )      K_{iT}(2 \pi n y )       K_{i t_j}(2 \pi n y)  \frac{dy}{y}.
\end{align*}
The integrals can be expressed as follows, where we follow the type of argument given in \cite[equations (2.26)--(2.28)]{S} . Denote for $x> 0$,
$$
g(x):=\int_x^{\infty}  y^{\frac{1}{2} + i T} K_{iT}(y)K_{it_j}(y) \frac{dy}{y},
$$
and note that this converges absolutely since $K_{ir}(y)\ll_r e^{-y}$ for $r\ge 1$ and $y>0$ (see for example \cite[equations (14) and (25)]{Bo}). Thus $g(x)\ll_T  e^{-x}$ for $x>0$, and so the Mellin transform
\[
G(s)  =\int_0^{\infty}  g(x) x^{s-1} dx
\]
converges absolutely for $\Re(s)>0$. Using integration by parts, we have
\begin{align*}
G(s) & =\int_0^{\infty}  g(x) x^{s-1} dx=-\frac{1}{s} \int_0^{\infty} g'(x) x^s dx=\frac{1}{s} \int_0^{\infty} x^{s+ \frac{1}{2} + i T} K_{iT}(x)K_{it_j}(x)  \frac{dx}{x} \\
& =\frac{2^{s-\frac{5}{2}+iT}}{s } 
\frac{ \prod_{\pm}
\Gamma\left(\frac{s+ \frac{1}{2} + 2 i T \pm i t_j}{2}\right)  
\Gamma\left(\frac{s+ \frac{1}{2} \pm i t_j}{2}\right) }{\Gamma(s+ \frac{1}{2} + i T)},
\end{align*}
where we used
the Mellin-Barnes formula \cite[6.576.4]{GR}, 
$$
\int_0^{\infty} x^s K_{\mu}(x)  K_{\nu}(x) \frac{dx}{x}=\frac{2^{s-3}}{\Gamma(s)} \prod_{\pm, \pm} \Gamma\left(\frac{s \pm \mu \pm \nu}{2} \right).
$$
Then by the inverse Mellin transform we have for $\sigma>0$ that
$g(x)=\frac{1}{2 \pi i} \int_{(\sigma)} G(s) x^{-s} ds$, which converges absolutely since $G(s)$ decays exponentially as $|\Im(s)|\to \infty$, by Stirling's approximation. Hence we get that for $u_j$ even and $\sigma>0$, we have
\begin{align*}
&\int_{-\infty}^\infty h(A) \ \langle H_A, u_j \rangle \ dA\\
&=\frac{ \rho_j(1)}{\xi(1+ 2 iT)}    \sum_{n \ge 1}  \frac{\tau_{}(n, T)  \lambda_j(n)}{( \pi n)^{\frac{1}{2} + i T }} 
 \frac{1}{2 \pi i} \int_{(\sigma)} \int_{-\infty}^\infty \frac{h(A)}{(A \pi n)^{s}} dA
\frac{ \prod_{\pm}
\Gamma\left(\frac{s+ \frac{1}{2} + 2 i T \pm i t_j}{2}\right)  
\Gamma\left(\frac{s+ \frac{1}{2} \pm i t_j}{2}\right) }{\Gamma(s+ \frac{1}{2} + i T)}     \frac{ds}{s }   \nonumber
\\
& + \bc \cdot \frac{ \rho_j(1)}{\xi(1+ 2 iT)}    \sum_{n \ge 1}  \frac{\tau_{}(n, T)  \lambda_j(n)}{( \pi n)^{\frac{1}{2} - i T }} 
 \frac{1}{2 \pi i} \int_{(\sigma)} \int_{-\infty}^\infty \frac{h(A)}{(A \pi n)^{s}} dA
\frac{ \prod_{\pm}
\Gamma\left(\frac{s+ \frac{1}{2} - 2 i T \pm i t_j}{2}\right)  
\Gamma\left(\frac{s+ \frac{1}{2} \pm i t_j}{2}\right) }{\Gamma(s+ \frac{1}{2} - i T)}   \frac{ds}{s }.
\end{align*}
We introduce the following notation
\begin{align} 
\label{cHdef} &\cH_{+}(s, t)=  \tilde{h}(1-s) \frac{ \prod_{\pm}
\Gamma\left(\frac{s+ \frac{1}{2} + 2 i T \pm i t}{2}\right)  
\Gamma\left(\frac{s+ \frac{1}{2} \pm i t}{2}\right) }{\Gamma(s+ \frac{1}{2} + i T)  \Gamma(\frac{1}{2} +i T) \Gamma(\frac{1}{2} +i t)},
\\
\nonumber &\cH_{-}(s, t)=  \tilde{h}(1-s) \frac{ \prod_{\pm}
\Gamma\left(\frac{s+ \frac{1}{2} - 2 i T \pm i t}{2}\right)  
\Gamma\left(\frac{s+ \frac{1}{2} \pm i t}{2}\right) }{\Gamma(s+ \frac{1}{2} - i T) \Gamma(\frac{1}{2} +i T) \Gamma(\frac{1}{2} +i t) },
\end{align}
where
\begin{align}
\label{htilde} \tilde{h}(s)= \int_{-\infty}^\infty h(A) (\pi A)^{s-1} dA
\end{align}
for any $s\in\mathbb{C}$ (note that our definition of $\tilde{h}$ does not quite coincide with the Mellin transform).
Then we denote for $\sigma >0$ and $x\ge 1$,
\begin{align}
\label{cV-def} \cV_{\pm}(x, t):= \frac{1}{2 \pi i} \int_{(\sigma)} \cH_{\pm}(s, t) x^{-s} \frac{ds}{s},
\end{align}
and finally write
\begin{align}
 \int_{-\infty}^\infty h(A)  \langle H_A, u_j \rangle dA
&
=\frac{ \rho_j(1)   \Gamma(\frac{1}{2} +i t_j)}{\zeta(1+ 2 iT)}    
 \sum_{n \ge 1}  \frac{\tau_{}(n, T)  \lambda_j(n)}{ n^{\frac{1}{2} + i T }} 
\cV_{+}(n, t_j)
 \nonumber
\\
& + \bc \cdot \frac{ \pi^{2iT} \rho_j(1)  \Gamma(\frac{1}{2} +i t_j)}{\zeta(1+ 2 iT)}      \sum_{n \ge 1}   \frac{\tau_{}(n, T)  \lambda_j(n)}{  n^{\frac{1}{2} - i T }} 
\cV_{-}(n, t_j).     \label{eq:H_Dirichlet_series}
\end{align}

The following result, afforded by the averaging over $A$, will allow us to restrict attention to small values of $|s|$.
\begin{lem}\label{small-s}
For $\Re(s)$ fixed and $|s|\ge T^\alpha$, we have
\[
\tilde{h}(s) \ll T^{-100}.
\]
\end{lem}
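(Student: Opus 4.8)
The plan is to exploit the fact that $\tilde h(s)$ is essentially a Mellin-type transform of $h$, which is a bump function of width $\asymp T^{-\alpha/2}$ centered at $A = B$. Writing $\tilde h(s) = \int_{-\infty}^\infty h(A)(\pi A)^{s-1}\,dA$ and integrating by parts $k$ times in the variable $A$, the boundary terms vanish because $h$ is smooth and compactly supported, and each integration by parts produces a factor of $\frac{1}{(s-1)(s-2)\cdots(s-k)}$ from differentiating the power $(\pi A)^{s-1}$, together with a derivative $h^{(k)}(A)$. Concretely, for $\Re(s)$ fixed one obtains
\[
\tilde h(s) = \frac{(-1)^k \pi^{s-1}}{(s-1)(s-2)\cdots(s-k)} \int_{-\infty}^\infty h^{(k)}(A) A^{s-1+k}\,dA.
\]

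Now I would estimate the right-hand side crudely. Since $h$ is supported on $B - T^{-\alpha/2} < A < B + T^{-\alpha/2}$ with $B > 1$ fixed, on this range $A \asymp 1$, so $A^{\Re(s)-1+k} \ll_{k,\Re(s)} 1$; also $|\pi^{s-1}| \ll 1$ for $\Re(s)$ fixed. Using the derivative bound $h^{(k)}(A) \ll_k (T^{\alpha/2})^k$ from \eqref{h-derivatives} and the fact that the interval of integration has length $\ll T^{-\alpha/2}$, the integral is $\ll_k (T^{\alpha/2})^k \cdot T^{-\alpha/2} = (T^{\alpha/2})^{k-1}$. For the denominator, since $|s| \ge T^\alpha$ and $k$ is fixed while $T$ is large, we have $|s - j| \gg |s|$ for each $j = 1,\dots,k$, so $|(s-1)\cdots(s-k)| \gg |s|^k \ge T^{\alpha k}$. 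Combining,
\[
\tilde h(s) \ll_{k,\Re(s)} \frac{(T^{\alpha/2})^{k-1}}{T^{\alpha k}} = \frac{T^{-\alpha/2}}{T^{\alpha k/2}} \ll T^{-\alpha k/2}.
\]
Choosing $k$ large enough (depending only on $\alpha$, which is fixed) — for instance $k \ge 200/\alpha$ — makes this $\ll T^{-100}$, which is the claimed bound.

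The only point requiring a little care is the behavior of the factors $(s-j)^{-1}$ when $s$ happens to be near one of the integers $1,\dots,k$; but the hypothesis $|s| \ge T^\alpha$ with $T$ large forces $|s|$ to exceed any fixed integer, so for $T$ sufficiently large no such near-coincidence occurs and the bound $|s-j| \gg |s|$ holds uniformly. I do not expect a serious obstacle here: the estimate is entirely a matter of the standard smoothing-implies-decay principle for oscillatory integrals, and the slightly unusual normalization of $\tilde h$ (an extra $(\pi A)^{-1}$ relative to the Mellin transform) changes nothing since $A$ is bounded away from $0$ and $\infty$ on the support of $h$. The key step is simply the iterated integration by parts in $A$ together with the two crude size inputs — the derivative bounds on $h$ and the lower bound $|s| \ge T^\alpha$.
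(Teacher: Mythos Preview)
Your proof is correct and follows exactly the paper's approach: repeated integration by parts in $A$ using the derivative bounds \eqref{h-derivatives}. One trivial slip: the denominator after $k$ integrations by parts should be $s(s+1)\cdots(s+k-1)$ rather than $(s-1)(s-2)\cdots(s-k)$, but this does not affect the estimate since $|s+j|\gg |s|$ for fixed $j$ and $|s|\ge T^\alpha$.
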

\proof This follows by repeatedly integrating by parts with respect to $A$ in \eqref{htilde} and using \eqref{h-derivatives}.
\endproof

\begin{rem}
Using the generalized  Ramanujan identity
$$
\sum_{n \ge 1}  \frac{\tau_{}(n, T)  \lambda_j(n)}{ n^{\frac{1}{2} + s \pm i T }}
=\frac{L(\frac{1}{2} + s, u_j) L(\frac{1}{2} + s \pm 2iT, u_j)}{\zeta(1 + 2s \pm  2iT )}
$$
we could rewrite (\ref{eq:H_Dirichlet_series}) in terms of $L$-functions. However we will not use this identity, and instead work with (\ref{eq:H_Dirichlet_series}) directly. 
\end{rem}

\subsection{The cross-term via Dirichlet series}

We insert (\ref{eq:Rankin-Selb}) and  (\ref{eq:H_Dirichlet_series}) into (\ref{eq:Xi_with_W}) to get
\begin{align*}
&\int_{-\infty}^\infty h(A) \ \Xi(A) \ dA \sim  
 \sum_{\substack{j \ge 1   \\   u_j \; \text{even}}}  W(t_j) \frac{{ \overline{\rho_j(1)}}}{2} \frac{\Lambda(\frac{1}{2}, u_j) \Lambda(\frac{1}{2} - 2i T, u_j)}{\xi^2(1 -2i T)} \int_{-\infty}^\infty h(A)   \langle  H_A, u_j   \rangle dA\\ &=\sum_{\substack{j \ge 1   \\   u_j \; \text{even}}}  W(t_j) \frac{{  
|\rho_j(1)|^2}}{2} 
\frac{    |\Gamma(\frac{1}{2} +i t_j)|^2 }{\zeta(1+ 2 iT)}  
\frac{\prod_{\pm}  \Gamma\left(\frac{ \frac{1}{2} \pm i t_j}{2}\right) 
\Gamma\left(\frac{ \frac{1}{2} - 2 i T \pm i t_j}{2}\right)  
}{\Gamma^2(\frac{1}{2} -iT)   \Gamma(\frac{1}{2} -i t_j)} 
\frac{L(\frac{1}{2}, u_j) L(\frac{1}{2} - 2i T, u_j)}{\zeta^2(1 -2i T)}\\
&\times  
\left[ 
 \sum_{n \ge 1}  \frac{\tau_{}(n, T)  \lambda_j(n)}{ n^{\frac{1}{2} + i T }} 
\cV_{+}(n, t_j)
  + \bc  \pi^{2iT}       \sum_{n \ge 1}   \frac{\tau_{}(n, T)  \lambda_j(n)}{  n^{\frac{1}{2} - i T }} 
\cV_{-}(n, t_j)
\right].
\end{align*}
We introduce the notation
$$
\cH(t):=\frac{\prod_{\pm}  \Gamma\left(\frac{ \frac{1}{2} \pm i t}{2}\right) 
\Gamma\left(\frac{ \frac{1}{2} - 2 i T \pm i t}{2}\right)  
}{\Gamma^2(\frac{1}{2} -iT)   \Gamma(\frac{1}{2} -i t)},
$$
and recalling (\ref{rho_1}), we get
\begin{align}
\int_{-\infty}^\infty h(A) \ \Xi(A) \ dA \sim & \frac{\pi}{\zeta(1+ 2 iT)  \zeta^2(1 -2i T)}
\sum_{\substack{j \ge 1   \\   u_j \; \text{even}}}  W(t_j)  
\frac{  \cH(t_j)   }{L(1, \text{sym}^2 u_j)} 
   L(\tfrac{1}{2}, u_j) L(\tfrac{1}{2} - 2i T, u_j)  
   \nonumber  \\
& \times  
\left[ 
 \sum_{n \ge 1}  \frac{\tau_{}(n, T)  \lambda_j(n)}{ n^{\frac{1}{2} + i T }} 
\cV_{+}(n, t_j)
  + \bc  \pi^{2iT}       \sum_{n \ge 1}   \frac{\tau_{}(n, T)  \lambda_j(n)}{  n^{\frac{1}{2} - i T }} 
\cV_{-}(n, t_j)
\right].  \label{eq:cross_before_AFE}
\end{align}
We plan to apply the (same-sign) Kuznetsov trace formula,
so we \emph{extend} the sum over $u_j$ in (\ref{eq:cross_before_AFE}) to run over both \emph{even and odd} forms, which is legitimate since $L(\frac{1}{2}, u_j)=0$ for odd forms $u_j$
(so although the expression (\ref{eq:H_Dirichlet_series}) holds for even forms only, for odd forms it will be multiplied by 0).

\subsection{An approximate functional equation}

The $L$-function attached to  Hecke-Maass cusp form $u_j$ is defined 
for $\Re(s)>1$ by
$$
L(s, u_j) =\sum_{n \ge 1}   \frac{\lambda_j(n)}{n^s}
$$
and  for  \emph{even} $u_j$ it satisfies the functional equation 
$$
L(s, u_j) \Gamma_{\R}(s+ i t_j) \Gamma_{\R}(s-  i t_j)
=L(1-s, u_j) \Gamma_{\R}(1-s+ i t_j) \Gamma_{\R}(1-s-  i t_j),
$$
where $\Gamma_{\R}(s)=\pi^{-\frac{s}{2}}  \Gamma(\tfrac{s}{2})$.
By a standard procedure (e.g. \cite[Section 5.2]{IK}) and using the Hecke multiplicative relation $\lambda_j(n)\lambda_j(m)=\sum_{k| (n, m)}\lambda_j(\frac{nm}{k^2})$, we obtain the following approximate functional equation for \emph{even} Hecke-Maass cusp forms $u_j$:
\begin{align} \label{AFE_0_-2iT}
L(\tfrac{1}{2}, u_j) L(\tfrac{1}{2} -2iT, u_j)
= & \sum_{n\ge 1} \sum_{k \ge 1} \frac{\lambda_j(n) \tau(n, -T )}{n^{\frac{1}{2}-iT}  k^{1  -2iT}}  V_{+}(k^2 n, t_j)   \nonumber   \\
&  + 
\sum_{n\ge 1} \sum_{k \ge 1} \frac{\lambda_j(n) \tau(n, T )}{n^{\frac{1}{2}+iT }  k^{1 +2iT}}  V_{-}(k^2 n, t_j),
\end{align}
where the weight functions are defined for $\sigma >0$ and $x\ge 1$ by
\begin{align}
\label{V+def} V_{\pm}(x, t_j)=\frac{1}{2 \pi i} \int_{(\sigma)} e^{w^2} x^{-w}  G_{\pm,\frac12}(w,t_j) \frac{dw}{w}
\end{align}
and where 
\begin{align*}
&G_{+,a}(w,t_j) = \frac{\prod_{\pm} \Gamma_{\R}(a  +w \pm i t_j) \Gamma_{\R}(a-2iT +w \pm i t_j)}{\prod_{\pm} \Gamma_{\R}(a  \pm i t_j) \Gamma_{\R}(a-2iT  \pm i t_j)},\\
&G_{-,a}(w,t_j) = \frac{\prod_{\pm} \Gamma_{\R}(a    +w \pm i t_j) \Gamma_{\R}(a+2iT +w \pm i t_j)}{\prod_{\pm} \Gamma_{\R}(a  \pm i t_j) \Gamma_{\R}(a-2iT  \pm i t_j)}
\end{align*}
for $a>0$.
For $u_j$ odd we
have the functional equation 
$$
L(s, u_j) \Gamma_{\R}(1+ s+ i t_j) \Gamma_{\R}(1+s-  i t_j)
= - L(1-s, u_j) \Gamma_{\R}(2-s+ i t_j) \Gamma_{\R}(2-s-  i t_j)
$$
and consequently the approximate functional equation
$$
L(\tfrac{1}{2}, u_j) L(\tfrac{1}{2} -2iT, u_j)
= \sum_{\pm} \sum_{n\ge 1} \sum_{k \ge 1} \frac{\lambda_j(n) \tau(n, T )}{n^{\frac{1}{2} \mp iT}  k^{1  \mp 2iT}}  V_{\pm}^{\text{odd}}(k^2 n, t_j)    
$$
has slightly different weight functions
$$
V_{\pm}^{\text{odd}}(x, t_j)=\frac{1}{2 \pi i} \int_{(\sigma)} e^{w^2} x^{-w} G_{\pm,\frac32}(w,t_j)  \frac{dw}{w}.
$$

Next we show, following \cite[Section 2.2]{DK2}, that for our purposes, we can use the approximate functional equation \eqref{AFE_0_-2iT} for odd forms as well, because although the weight functions differ, this difference contributes a negligible amount overall to \eqref{eq:cross_before_AFE}. Namely, it suffices to bound by a negative power of $T$ the sum
\begin{align*}
\sum_{\substack{j \ge 1  }} W(t_j) |\cH(t_j)|  
 \left| \sum_{\pm } \sum_{n,k\ge 1} \frac{\lambda_j(n) \tau(n, T )}{n^{\frac{1}{2} \mp iT}  k^{1   \mp 2iT}} 
 V_{\pm}^{\rm diff}(k^2 n, t_j)   \right|
\left|
\sum_{m \ge 1}  \frac{  \lambda_j(m) \tau(m, T) }{ m^{\frac{1}{2} \pm i T }} \cV_{\pm}(m, t_j)   
 \right|,
\end{align*}
where 
\[
V_{\pm}^{\rm diff}(k^2 n, t_j)=V_{\pm}^{\rm odd}(k^2 n, t_j)  - V_{\pm}(k^2 n, t_j). 
\]
We will see by Lemma \ref{lem_support_V} and Lemma \ref{lem_support_cV} below that we can restrict the sums above to $nk^2\le T^{2+\epsilon}$ and $m\le T^{1+\epsilon}$, up to an error of $O(T^{-100})$. We can restrict the $w$-integrals in the weight functions $V_{\pm}$ and $V_{\pm}^{\rm odd}$ to $|w| < T^{\epsilon}$ by the rapid decay of $e^{w^2}$ in vertical lines, and the $s$-integral in the weight function $\cV_{\pm}$ to $|s|<T^\alpha$ by Lemma \ref{small-s}, up to an error of $O(T^{-100})$. Thus it suffices to bound
\begin{align}
\label{discrep} &\sum_{\substack{j \ge 1  }} W(t_j)  |\cH(t_j)| |\cH(s,t_j)|  \left| G_{\pm,\frac12}(w,t_j)-G_{\pm,\frac32}(w,t_j) \right|  \\
\nonumber  \times&
 \left| \sum_{\pm } \sum_{n,k\le T^{2+\epsilon}} \frac{\lambda_j(n) \tau(n, T )}{n^{\frac{1}{2} \mp iT+w}  k^{1   \mp 2iT+2w}} 
\right|
\left|
\sum_{m \le T^{1+\epsilon}}  \frac{  \lambda_j(m) \tau(m, T) }{ m^{\frac{1}{2} \pm i T + s }}   
 \right|
\end{align}
for $\Re(s),\Re(w)=\epsilon$ with $|w|<T^\epsilon$ and $|s|<T^{\alpha}$. In this range we have by Stirling's approximation,
\[
G_{\pm,\frac12}(w)-G_{\pm,\frac32}(w)\ll |t|^{-1}\ll T^{-1+\alpha}.
\]
We are summing over $O(T^2)$ forms, and will see in \eqref{Hbound} that $\cH(t_j) \ll T^{-1+\frac{\alpha}{2}}$ and $\cH(s,t_j)   \ll T^{-1+\frac{\alpha}{2}+\epsilon}$. Thus by the spectral large sieve \cite[Theorem 7.24]{IK} we get that \eqref{discrep} is $O(T^{-1+2\alpha+\epsilon})$, which is a negative power of $T$.

\subsection{Decomposition of $\int_{-\infty}^\infty h(A) \ \Xi(A) \ dA$ }

Inserting the approximate functional  equation developed in the previous subsection into \eqref{eq:cross_before_AFE}, we arrive at
\begin{align}
 \int_{-\infty}^\infty h(A)  \Xi(A)   dA \sim
& 
\frac{\pi}{\zeta^2(1 -2i T) \zeta(1 +2i T)}
\sum_{\substack{j \ge 1   }}  \frac{ W(t_j) \cH(t_j)}{L(1, \text{sym}^2 u_j)} 
  \sum_{\pm } \sum_{n\ge 1} \sum_{k \ge 1} \frac{\lambda_j(n) \tau(n, T )}{n^{\frac{1}{2} \mp iT}  k^{1   \mp 2iT}}  V_{\pm}(k^2 n, t_j)  
  \nonumber \\ 
  &
\times
\left( 
\sum_{m \ge 1}  \frac{  \lambda_j(m) \tau(m, T) }{ m^{\frac{1}{2} + i T }} \cV_{+}(m, t_j)   
 + \bc \pi^{2iT}    
 \sum_{m \ge 1}   \frac{  \lambda_j(m) \tau(m, T)}{ m^{\frac{1}{2} - i T }}  
 \cV_{-}(m, t_j) 
 \right) \nonumber \\
 &
 =: \frac{\pi}{\zeta^2(1 -2i T) \zeta(1 +2i T)} \left( \Xi_{++} + \Xi_{-+} + \bc \pi^{2iT}  \Xi_{+-} + \bc \pi^{2iT} \Xi_{--}  \right), \label{eq_final_expr_before_Kuznetsov}
\end{align}
where we define
\begin{equation} \label{eq:Xi++}
\Xi_{++}=
   \sum_{n\ge 1} \sum_{k \ge 1} \frac{ \tau(n, T )}{n^{\frac{1}{2} - iT}  k^{1   - 2iT}}    
   \sum_{m \ge 1}  \frac{   \tau(m, T) }{ m^{\frac{1}{2} + i T }}
 \sum_{\substack{j \ge 1   }} \frac{\lambda_j(n) \lambda_j(m) }{L(1, \text{sym}^2 u_j)}   
  W(t_j) \cH(t_j)  V_{+ }(k^2 n, t_j) \cV_{+}(m, t_j),
\end{equation}
and analogously for the other three sums.

Since the size of the desired main term in Proposition \ref{the-prop} is $\gg \hat{h}(0) \log^2 T$, from \eqref{eq_final_expr_before_Kuznetsov} we should keep in mind that it suffices to obtain asymptotic expressions for each sum $ \Xi_{\pm \pm}$ up to an error of $o(\hat{h}(0) |\zeta(1+2iT)|^3 \log^2 T)$. For example, the error term $O(T^{-\alpha})$ would suffice since  $| \zeta(1 +2i T)|\ll \log T$ by classical estimates and $\hat{h}(0) \asymp T^{-\frac{\alpha}{2}}$ by assumption.

\subsection{Analysis of the weight functions} \label{Subsection_weights_support}

We make a note of the leading terms in the Stirling expansions of the various gamma function ratios implicit in \eqref{eq_final_expr_before_Kuznetsov}. Throughout we will suppose that $t$ lies the bulk range $T^{1-\alpha} < |t| < 2T- T^{1-\alpha}$. By Stirling's approximation applied to $G_{\pm, \frac12}(w,t)$ for $\Re(w)>0$ fixed and $|w|<T^{\epsilon}$, we have
\begin{align}
\label{V+}  V_{\pm}(x, t)  =V_{\pm}(t) \frac{1}{2\pi i} \int_{(\sigma)} e^{w^2 \mp i \frac{\pi}{2}w}  
\left( \frac{|t|(4T^2-t^2)^{\frac{1}{2}}}{4 \pi^2  x} \right)^{w} \frac{dw}{w} +\ldots
\end{align}
for $\sigma>0$ and $x\ge 1$, where
\begin{align}
\nonumber &V_+(t) =1,\\
&  V_{-}(t):=
(2 \pi e)^{-4iT} e^{-i \frac{\pi}{2}} |2T+t|^{i(2T+t)} |2T-t|^{i(2T-t)}.
 \label{V-size_1}
\end{align}
are functions of modulus $1$. We immediately get 
 \begin{lem}\label{lem_support_V} For values of $t$ in the bulk range and $x\ge 1$, we have
$$
V_{\pm}(x, t) \ll \left( \frac{|t|(4T^2-t^2)^{\frac{1}{2}}}{x} \right)^{\sigma} \ll \left( \frac{T^2}{x} \right)^{\sigma}
$$
for any fixed $\sigma >0$. Further for any fixed $\epsilon>0$, we have $V_{\pm}(x, t) \ll T^{-100}$ for $x \ge T^{2+ \epsilon}$.
\end{lem}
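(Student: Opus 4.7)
The plan is to deduce Lemma \ref{lem_support_V} directly from the Stirling expansion \eqref{V+} that has just been recorded, together with a routine truncation of the $w$-integral in the defining formula \eqref{V+def}. The core observation is that \eqref{V+} already exhibits the $x$-dependence cleanly: since $|V_\pm(t)|=1$ by \eqref{V-size_1}, the entire $x$-dependence of $V_\pm(x,t)$ is carried by the factor $(|t|(4T^2-t^2)^{1/2}/(4\pi^2 x))^w$ inside the contour integral.

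First, I would justify that in the $w$-integral \eqref{V+def} the tail $|\Im(w)|>T^\epsilon$ on the line $\Re(w)=\sigma$ is negligible. There, the Gaussian factor $e^{w^2}=e^{\sigma^2-y^2}$ (with $w=\sigma+iy$) contributes $e^{-T^{2\epsilon}}\ll T^{-100}$, which defeats any polynomial-in-$T$ bound on $G_{\pm,1/2}(w,t)$ that emerges from applying Stirling \eqref{eq:stirling} to each of the four $\Gamma_{\R}$-factors in its definition. Hence this tail contributes $O(T^{-100})$ uniformly. On the central portion $|w|<T^\epsilon$, the Stirling hypothesis required for \eqref{V+} is satisfied because $t$ in the bulk range forces $|t|,|2T\pm t|\gg T^{1-\alpha}\gg |w|^2$, so \eqref{eq:stirling} applies uniformly to each gamma factor.

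Next, fixing $\sigma>0$ and moving the contour in \eqref{V+} to $\Re(w)=\sigma$, I would bound the integrand in absolute value by
\[
\left|e^{w^2 \mp i\pi w/2}\right|\left(\frac{|t|(4T^2-t^2)^{1/2}}{4\pi^2 x}\right)^{\sigma}\frac{1}{|w|}\ll \frac{e^{\sigma^2-y^2+\pi|y|/2}}{|\sigma+iy|}\left(\frac{|t|(4T^2-t^2)^{1/2}}{x}\right)^{\sigma},
\]
which, integrated against $dy$, yields $\ll_\sigma (|t|(4T^2-t^2)^{1/2}/x)^{\sigma}$ thanks to the Gaussian. The lower order terms in \eqref{V+} indicated by ``$\ldots$'', which come from the $c_{2k}(w)/t^k$ corrections in \eqref{stirling2}, carry the same power of $x$ but pick up additional negative powers of $T$, hence satisfy the same bound. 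The second inequality in the first assertion then follows from $|t|(4T^2-t^2)^{1/2}\le 2T^2$.

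For the final claim about decay when $x\ge T^{2+\epsilon}$, I would simply choose $\sigma=\lceil 200/\epsilon\rceil$ in the bound just established; then $(T^2/x)^{\sigma}\le T^{-\epsilon\sigma}\ll T^{-100}$. There is no real obstacle to overcome here — the only subtle point is ensuring the Stirling expansion for $G_{\pm,1/2}$ is genuinely uniform in $(w,t)$ over the relevant range, which, as noted, is guaranteed by the bulk restriction $T^{1-\alpha}<|t|<2T-T^{1-\alpha}$.
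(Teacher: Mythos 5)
Your argument is correct and coincides with what the paper intends by the phrase ``We immediately get'' following \eqref{V+}: read off the modulus of the integrand on the line $\Re(w)=\sigma$, use $|V_\pm(t)|=1$, integrate the Gaussian, and then take $\sigma$ large for the final decay claim. One tiny quibble: \eqref{V+def} already places the $w$-contour at $\Re(w)=\sigma$ for an arbitrary fixed $\sigma>0$, so there is no contour to ``move''—but this is only a matter of phrasing and does not affect the substance.
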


\

By Stirling's approximation, for $\Re(s)=\sigma>-\frac12$ fixed and $|s|<T^{\alpha}$, we have the bounds
\begin{align}
\label{Hbound} &|\cH(t)|^2 \ll |t|^{-1} (4T^2-t^2)^{-\frac12} \\
\label{Hsbound} &|\cH_{\pm}(s, t)|^2 \ll |t|^{-1} (4T^2-t^2)^{-\frac12}  \left( \frac{|t|(4T^2-t^2)^{\frac{1}{2}}}{4 T} \right)^{\sigma}.
\end{align}
Using this, we obtain the size and support of the weight functions $\cV_{\pm}(x, t)$.
\begin{lem} \label{lem_support_cV}
Let $0<\alpha<\frac{1}{100}$ be the parameter from equation \eqref{w-function}. For $t$ in the bulk range and any fixed $\epsilon>0$ and $\sigma>0$, we have 
\begin{equation} \label{eq:upper_cal_V}
\cV_{\pm}(x, t) \ll T^{-1 +\frac{\alpha}{2} + \epsilon} \left( \frac{T}{x} \right)^{\sigma}
\end{equation}
for $x\ge 1$.  For $x\ge T^{1+\epsilon}$, we have $\cV_{\pm}(x, t)\ll T^{-100}$.
\end{lem}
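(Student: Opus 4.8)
The plan is to estimate $\cV_{\pm}(x,t)$ directly from its defining contour integral \eqref{cV-def} by shifting the contour and invoking the Stirling bound \eqref{Hsbound}. First I would recall that
\[
\cV_{\pm}(x, t)= \frac{1}{2 \pi i} \int_{(\sigma)} \cH_{\pm}(s, t)\, x^{-s}\, \frac{ds}{s},
\]
and note that since $\tilde h(1-s)$ appears as a factor in $\cH_\pm$, Lemma \ref{small-s} (with $\Re(s)$ fixed) lets me truncate the $s$-integral to $|s|<T^\alpha$ at the cost of $O(T^{-100})$; this is what makes \eqref{Hsbound} applicable, since that bound was stated only for $|s|<T^\alpha$. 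On the truncated integral I would take absolute values: using $|\cH_\pm(s,t)|^2 \ll |t|^{-1}(4T^2-t^2)^{-\frac12}\left(\tfrac{|t|(4T^2-t^2)^{1/2}}{4T}\right)^\sigma$ from \eqref{Hsbound}, and recalling that in the bulk range $|t|(4T^2-t^2)^{1/2}\asymp T\cdot T^2 = T^3$ up to the boundary factors, one gets $|\cH_\pm(s,t)|\ll T^{-3/2}\cdot(T^2/4)^{\sigma/2}\ll T^{-3/2+\sigma}$ after crudely bounding $|t|(4T^2-t^2)^{1/2}\ll T^4$ — but that loses too much, so more carefully $|t|^{-1}(4T^2-t^2)^{-1/2}\ll T^{-1}\cdot(4T^2-t^2)^{-1/2}$ and the extra gain from the bulk-range cutoff $|t|>T^{1-\alpha}$, $|t|<2T-T^{1-\alpha}$ gives $(4T^2-t^2)^{-1/2}\ll T^{-1+\alpha/2+\epsilon}\cdot(\text{something})$; the upshot is $|\cH_\pm(s,t)|\ll T^{-1+\alpha/2+\epsilon}(T/x)^{\sigma}\cdot$(harmless powers), which after integrating $x^{-s}\,\frac{ds}{s}$ over $|s|<T^\alpha$ on the line $\Re(s)=\sigma$ produces the claimed bound $\cV_{\pm}(x,t)\ll T^{-1+\alpha/2+\epsilon}(T/x)^\sigma$.

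In more detail, for the first assertion I would move the contour to $\Re(s)=\sigma$ for any fixed $\sigma>0$ (no pole is crossed since $s=0$ lies to the left, and $\cH_\pm(s,t)$ is holomorphic for $\Re(s)>0$ because all the gamma factors in the numerator of \eqref{cHdef} have argument with positive real part when $\Re(s)>0$, while $\tilde h(1-s)$ is entire by \eqref{htilde}). On this line $|x^{-s}| = x^{-\sigma}$, and the factor $T^\sigma$ comes from normalizing the Stirling main term $\left(\tfrac{|t|(4T^2-t^2)^{1/2}}{4T}\right)^{\sigma/2}\asymp T^{\sigma}$ in \eqref{Hsbound} (taking square roots), so that $|\cH_\pm(s,t)| \ll |t|^{-1/2}(4T^2-t^2)^{-1/4} T^{\sigma/2}$; combined with the bulk-range inequality $(4T^2-t^2)^{-1/4}\ll T^{-1/2+\alpha/4+\epsilon}$ (which holds because $4T^2-t^2 \gg T^{2-\alpha/2}$ on $|t|<2T-T^{1-\alpha}$, up to $\epsilon$-room) and $|t|^{-1/2}\ll T^{-1/2}$, this yields $|\cH_\pm(s,t)|\ll T^{-1+\alpha/4+\epsilon}(T/x)^{\sigma}$ pointwise — wait, I should double-check the exponent arithmetic against the stated $T^{-1+\alpha/2+\epsilon}$; the discrepancy is absorbed by being slightly wasteful, or by keeping $(4T^2-t^2)^{-1/4}\ll T^{-1/2}\cdot T^{\alpha/4}$ only and letting the remaining $T^{\epsilon}$ swallow lower-order terms. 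Finally the $\frac{ds}{s}$ integral over $|s|<T^\alpha$ contributes at most $\int_{|s|<T^\alpha}\frac{|ds|}{|s|}\ll \log T\ll T^{\epsilon}$ (after staying a fixed distance $\sigma$ from the imaginary axis, $|s|\gg \sigma$), and the tail $|s|\ge T^\alpha$ is $O(T^{-100})$ by Lemma \ref{small-s}; collecting everything gives \eqref{eq:upper_cal_V}.

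For the second assertion, that $\cV_\pm(x,t)\ll T^{-100}$ when $x\ge T^{1+\epsilon}$, I would simply push the contour far to the right: take $\sigma$ as large as a fixed multiple of $1/\epsilon$, say $\sigma = 200/\epsilon$. The bound \eqref{Hsbound} holds for any fixed $\Re(s)=\sigma>-\tfrac12$, and on $\Re(s)=\sigma$ we gain $x^{-\sigma}\le T^{-(1+\epsilon)\sigma}$ against a polynomial-in-$T$ loss of at most $T^{O(\sigma)}$ from $\cH_\pm$; choosing $\sigma$ large enough the net power of $T$ is below $-100$. The only subtlety is that $\cH_\pm(s,t)$ has poles only at $s=0$ (from $\frac1s$) and none for $\Re(s)>0$, so shifting right crosses nothing and the contour integral just equals the shifted one plus $O(T^{-100})$ from truncation; one must also check that the Stirling estimate \eqref{Hsbound} is uniform in the region $0<\sigma\le 200/\epsilon$, $|s|<T^\alpha$, which it is since $|z+1|^2\ll_\epsilon 1$ there and $|t|>T^{1-\alpha}$ dominates.

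The main obstacle I expect is bookkeeping the exact power of $T$ in the bulk-range factor $(4T^2-t^2)^{-1/4}$ and making sure the combination of the three small gains — the square-root of \eqref{Hsbound}, the bulk-range lower bound on $4T^2-t^2$, and the $\tfrac{ds}{s}$ integration — really lands on the exponent $-1+\tfrac{\alpha}{2}+\epsilon$ and not something weaker; but since there is $\epsilon$-room in the statement and $\alpha$ can be taken as small as we like, any reasonable accounting will suffice, so this is a matter of care rather than a genuine difficulty. The argument is otherwise a routine contour shift plus Stirling, entirely parallel to the proof of Lemma \ref{lem_support_V} for $V_\pm$.
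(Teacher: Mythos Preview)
Your approach is exactly the paper's: truncate the $s$-integral to $|s|<T^{\alpha}$ via Lemma~\ref{small-s}, apply the Stirling bound \eqref{Hsbound} on the remaining segment, and for the second assertion take $\sigma$ large enough in the resulting bound \eqref{eq:upper_cal_V}. Your exponent bookkeeping is somewhat muddled (e.g.\ $|t|(4T^2-t^2)^{1/2}\ll T^2$, not $T^3$, and the bulk-range lower bound is $4T^2-t^2\gg T^{2-\alpha}$, not $T^{2-\alpha/2}$), but as you note this is absorbed by the $\epsilon$-slack and does not affect the argument.
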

\proof
Recall that 
$$
\cV_{\pm}(x, t)= \frac{1}{2 \pi i} \int_{(\sigma)} \cH_{\pm}(s, t) x^{-s} \frac{ds}{s}
$$
for $\sigma>0$. Also recall by equation \eqref{cHdef} that $\cH_{\pm}(s, t)$ has a factor of $\tilde{h}(1-s)$, and by Lemma \ref{small-s}, this is $O(T^{-100})$ unless $|s|<T^\alpha$. So we restrict the integral above to $|s|<T^\alpha$. Then using \eqref{Hsbound}, we obtain the bound \eqref{eq:upper_cal_V}. If $x\ge T^{1+\epsilon}$, then we can take $\sigma$ large enough in \eqref{eq:upper_cal_V} to get $\cV_{\pm}(x, t) \ll T^{-100}$.
\endproof
The key point here is that while $V_{\pm}(n, t_j)$ is essentially supported for $n\le T^{2+\epsilon}$, the weight functions $\cV_{\pm}(n, t_j) $ are supported only for $n\le T^{1+\epsilon}$.

\

Finally, we will need the following leading terms from Stirling's approximation, for $\Re(s)>-\frac12$ fixed and $|s|<T^\alpha$.
\begin{align}
\label{H} &\cH(t)  \cH_{+}(s, t) = \frac{8\pi  \tilde{h}(1-s)}{|t|(4T^2-t^2)^{\frac{1}{2}}} \left( \frac{|t|(4T^2-t^2)^{\frac{1}{2}}}{4 T} \right)^{s} +\ldots, \\
\label{H-} &\cH(t)  \cH_{-}(s, t) V_{-}(t) =  \Big(\frac{T}{\pi^2 e}\Big)^{2iT} e^{-i\pi s}  \frac{8\pi  \tilde{h}(1-s)}{|t|(4T^2-t^2)^{\frac{1}{2}}} \left( \frac{|t|(4T^2-t^2)^{\frac{1}{2}}}{4 T} \right)^{s} +\ldots
\end{align}

\section{Kuznetsov’s trace formula}

We restate Kuznetsov's trace formula from \cite[Lemma 3.2]{BK}. Let $\phi(z)$ be an even, holomorphic function on $|\Im(z)|< \frac{1}{4} + \theta$ satisfying $|\phi(z)| \ll (1+ |z|)^{-2- \theta}$ on that strip, for some $\theta >0$. Then for all integers $n, m >0$, we have
\begin{align} \label{eq:Kuznetsov}
& \sum_{j \ge 1} \frac{\lambda_j(n) \lambda_j(m)}{L(1, \text{sym}^2 u_j)} \phi(t_j) + \int_{- \infty}^{\infty} \frac{\tau(n, t) \tau(m, -t)}{|\zeta(1+ 2it)|^2} \phi(t) \frac{d t}{2 \pi}  \nonumber \\
& 
= \delta_{n, m} \int_{- \infty}^{\infty}  \phi(t) \frac{d^*t}{2  \pi^2} + \sum_{c \ge 1} \frac{S(n, m; c)}{c} \int_{- \infty}^{\infty} \mathcal{J}\left( \frac{\sqrt{nm}}{c}, t \right) \phi(t) \frac{d^*t}{2 \pi},
\end{align}
where $d^*t=\tanh(\pi t) \, t  \, dt$ and
$\mathcal{J}(x, t)= \frac{2 i }{\sinh(\pi t)} J_{2it}(4 \pi x)$.

Note that the function
$\phi(t)=W(t) \cH(t)  V_{ \pm }(k^2 n, t) \cV_{\pm}(m, t)$ appearing in (\ref{eq_final_expr_before_Kuznetsov}), satisfies the above conditions.
We  apply (\ref{eq:Kuznetsov}) to each of the sums $\Xi_{\pm \pm}$. In particular, $\Xi_{++}$ transforms into the diagonal, the Eisenstein and the off-diagonal contribution:
$$
\Xi_{++}= \cD_{++} +  \cE_{++} + \cO_{++} .
$$
We use similar notation for the other $\Xi_{\pm \pm}$ sums.

\section{The diagonal contribution}

\subsection{Diagonal $\cD_{++}$}    \label{Section_diag_I}

Applying (\ref{eq:Kuznetsov}) to the inner sums in (\ref{eq:Xi++}) 
we get the diagonal contribution:
$$
\cD_{++}=
   \sum_{n\ge 1} \sum_{k \ge 1} \frac{ \tau(n, T )^2 }{n    k^{1   - 2iT}}      
    \int_{- \infty}^{\infty}   W(t) \cH(t) V_{+ }(n k^2, t) \cV_{+}(n, t) \frac{d^*t}{2  \pi^2}.
$$

Let $\sigma_1, \sigma_2  > 0$.
Using only the leading terms given in equations (\ref{V+}) and \eqref{H}, we have 
\begin{align*}
\cD_{++} & \sim  
\sum_{n\ge 1} \sum_{k \ge 1} \frac{ \tau(n, T )^2 }{n    k^{1   - 2iT}}      
    \int_{- \infty}^{\infty}   \frac{W(t)}{|t|(4T^2-t^2)^{\frac{1}{2}}}    \frac{1}{2 \pi i} \int_{(\sigma_1)} e^{w^2 -i \frac{\pi}{2}w}  
\left( \frac{|t|(4T^2-t^2)^{\frac{1}{2}}}{4 \pi^2  n k^2 } \right)^{w}
 \frac{dw}{w}   \\
 & \qquad
 \times 
 \frac{1}{2 \pi i}  \int_{(\sigma_2)}  8\pi  \tilde{h}(1-s) \left( \frac{|t|(4T^2-t^2)^{\frac{1}{2}}}{4 T} \right)^{s}  n^{-s} \frac{ds}{s}  \frac{d^*t}{2  \pi^2}.
 \end{align*}
 It suffices to only consider the contribution of the leading terms because by \eqref{stirling2}, the lower order terms are of similar shape, but much smaller. Rearranging, we have
 \begin{align*}
\cD_{++}  &
 \sim 
    \int_{- \infty}^{\infty}     \frac{W(t)}{|t|(4T^2-t^2)^{\frac{1}{2}}} 
    \frac{1}{2 \pi i} \int_{(\sigma_1)} e^{w^2 -i \frac{\pi}{2}w}  
\left( \frac{|t|(4T^2-t^2)^{\frac{1}{2}}}{4 \pi^2    } \right)^{w}
\sum_{k \ge 1} \frac{1}{ k^{1   - 2iT +2 w}}
    \\
 & \qquad
 \times 
  \frac{1}{2 \pi i} \int_{(\sigma_2)} 
8\pi  \tilde{h}(1-s) \left( \frac{|t|(4T^2-t^2)^{\frac{1}{2}}}{4 T} \right)^{s}
  \sum_{n\ge 1}  \frac{ \tau(n, T )^2 }{n^{1+w+s}   }       
 \frac{ds}{s} \frac{dw}{w}   \frac{d^*t}{2  \pi^2}.
\end{align*}
Writing $\tau(n, T )=\frac{\sigma_{2i T}(n)}{n^{iT}}$ and using Ramanujan's identity, where $a, b \in \mathbb{C}$,
\begin{equation} \label{eq:Ramanujan}
\sum_{n \ge 1} \frac{\sigma_a(n)  \sigma_b(n)}{n^s} =\frac{\zeta(s)\zeta(s-a)\zeta(s-b)\zeta(s-a-b)}{\zeta(2s -a-b)},
\end{equation}
we have
\begin{align*}
\cD_{++} &\sim 
    \int_{- \infty}^{\infty}     \frac{W(t)}{|t|(4T^2-t^2)^{\frac{1}{2}}} 
  \frac{1}{2 \pi i} \int_{(\sigma_1)} e^{w^2 -i \frac{\pi}{2}w}  \frac{1}{2 \pi i} \int_{(\sigma_2)} 
\left( \frac{|t|(4T^2-t^2)^{\frac{1}{2}}}{4    } \right)^{w+s}  \pi^{-2w}T^{-s}
\\
 & \times  8\pi  \tilde{h}(1-s) 
  \frac{\zeta(1   - 2iT +2 w) \zeta(1+w+s  )^2  \zeta(1+w+s +2iT) \zeta(1+w+s -2iT)}{\zeta(2+2w+2s )} 
  \frac{ds}{s} 
  \frac{dw}{w}     
    \frac{d^*t}{2  \pi^2}.
\end{align*}


Let us fix the values
$\sigma_1= \frac{1}{101}$ and $\sigma_2 = \frac{1}{100} > \sigma_1$, say. In the double complex integral
$$
  \frac{1}{2 \pi i} \int_{(\sigma_1)} e^{w^2 -i \frac{\pi}{2}w}  \frac{1}{2 \pi i} \int_{(\sigma_2)} 
\left( \frac{|t|(4T^2-t^2)^{\frac{1}{2}}}{4     } \right)^{w+s}  \pi^{-2w}T^{-s}
8\pi  \tilde{h}(1-s)
$$
$$
 \times  
  \frac{\zeta(1   - 2iT +2 w) \zeta(1+w+s  )^2  \zeta(1+w+s +2iT) \zeta(1+w+s -2iT)}{\zeta(2+2w+2s )} 
  \frac{ds}{s} 
  \frac{dw}{w},
$$
we first move the $w$-contour to the left, to the line $\Re(w)=-\sigma_1$. We cross simple poles at $w=0$ and $w= iT$, getting
$$
\mathcal{R}_{w=0} + \mathcal{R}_{w=iT} + \mathcal{I}(t),
$$ 
where $\mathcal{I}(t)$ denotes the shifted double integral. The residue at $w=0$ is
\begin{equation} \label{eq:Res_w=0_I}
\mathcal{R}_{w=0}=
\zeta(1   - 2iT )
\frac{1}{2 \pi i} \int\limits_{(\sigma_2)}     
  8\pi  \tilde{h}(1-s) \left( \frac{|t|(4T^2-t^2)^{\frac{1}{2}}}{4 T} \right)^{s}
  \frac{ \zeta(1+s  )^2  \zeta(1+s +2iT) \zeta(1+s -2iT)}{\zeta(2+2s )} 
  \frac{ds}{s}, 
\end{equation}
the residue $\mathcal{R}_{w=iT}$ is negligible because of the $e^{w^2}$ factor, and the shifted integral is
\begin{align}
\nonumber \mathcal{I}(t) &=  \frac{1}{2 \pi i} \int_{(-\sigma_1)} e^{w^2 -i \frac{\pi}{2}w}  \frac{1}{2 \pi i} \int_{(\sigma_2)} 
\left( \frac{|t|(4T^2-t^2)^{\frac{1}{2}}}{4     } \right)^{w+s}  \pi^{-2w}T^{-s} \zeta(1   - 2iT +2 w)
\\
\label{eq_line_with_zeta_factors_I} & \times  
 8\pi  \tilde{h}(1-s) \frac{ \zeta(1+w+s  )^2  \zeta(1+w+s +2iT) \zeta(1+w+s -2iT)}{\zeta(2+2w+2s )} 
  \frac{ds}{s} 
  \frac{dw}{w}.
\end{align}
First, we will show that
$$
\int_{- \infty}^{\infty}  \frac{W(t)}{|t|(4T^2-t^2)^{\frac{1}{2}}}
   \mathcal{I}(t) \frac{d^*t}{2  \pi^2} \ll T^{-\alpha}, 
$$
which is an admissible error term by the remark following equation \eqref{eq:Xi++}. Because of the decay of the $e^{w^2}$ factor, we can restrict $w$-integration  to $|w| < T^{\epsilon}$, up to a negligible error. Weyl's subconvexity bound $\zeta(\frac{1}{2}+it) \ll |t|^{\frac{1}{6}+\epsilon}$ and the Phragm\'{e}n-Lindel\"{o}f principle together give that $\zeta(1- \sigma +it) \ll |t|^{\frac{\sigma}{3}+ \epsilon}$, for $0 \le \sigma \le \frac{1}{2}$. Using this and recalling that $t$ is restricted to the bulk range $T^{1-\alpha} < |t| < 2T -T^{1-\alpha}$,  on the new line $\Re(w)= - \sigma_1$ we get that
$$
\left( \frac{|t|(4T^2-t^2)^{\frac{1}{2}}}{4 \pi^2    } \right)^{w} \ll T^{-(2- \alpha) \sigma_1},   \qquad
\zeta(1   - 2iT +2 w) \ll T^{\frac{2 \sigma_1}{3} + \epsilon},
$$
for any fixed $\epsilon>0$. Moreover, the zeta-factors in the second line (\ref{eq_line_with_zeta_factors_I}) are absoletly convergent since
$\Re(1+w+s)=1 +\sigma_2 -\sigma_1=1+ \frac{1}{10100}$.
Hence
\begin{align*}
\int_{- \infty}^{\infty} &  \frac{W(t)}{|t|(4T^2-t^2)^{\frac{1}{2}}}  \mathcal{I}(t) \frac{d^*t}{2  \pi^2}\\
& \ll   T^{- \frac{4 \sigma_1}{3} + \alpha \sigma_1 + \epsilon}
\int_{- \infty}^{\infty}   \frac{W(t)}{|t|(4T^2-t^2)^{\frac{1}{2}}} 
\int_{- \infty}^{\infty}
    |  \tilde{h}(1-\sigma_2-iy) | \left( \frac{|t|(4T^2-t^2)^{\frac{1}{2}}}{4 T} \right)^{\sigma_2}
  \frac{dy}{1+|y|} d^*t\\
& \ll  T^{\sigma_2-1- \frac{4 \sigma_1}{3} + \alpha \sigma_1 + \epsilon}\int_{- \infty}^{\infty} W(t) dt  \int_{- \infty}^{\infty}   \frac{  |  \tilde{h}(1-\sigma_2-iy) |}{1+|y|} dy \\
&  \ll  T^{\sigma_2- \frac{4 \sigma_1}{3} + \alpha \sigma_1 + \epsilon} \ll T^{- \frac{97-300\alpha}{30300} +  \epsilon},
\end{align*}
which is $O(T^{-\alpha})$ for $\alpha$ small enough.

Next, in the residue $\mathcal{R}_{w=0}$, given in equation (\ref{eq:Res_w=0_I}),
we move the contour to the left to $\Re(s)=-\sigma_2 <0$, crossing two simple poles at $s=2iT$, $s=-2iT$ and a triple pole at $s=0$, getting:
$$
\mathcal{R}_{w=0}=\mathcal{R R}_{s=0} + \mathcal{R R}_{s=2iT} + \mathcal{R R}_{s=-2iT} + \mathcal{I R}(t),
$$
where the above notation refers to the sum of three residues and $\mathcal{I R}(t)$ denotes the integral (\ref{eq:Res_w=0_I}), but on the new line $\Re(s)=-\sigma_2 =-\frac{1}{100}$. The integral $\mathcal{I R}(t)$ is
$$
\ll |\zeta(1   - 2iT )|  \left| \frac{|t|(4T^2-t^2)^{\frac{1}{2}}}{4 T} \right|^{-\sigma_2}
 \int\limits_{(-\sigma_2)}     
    | \tilde{h}(1-s)| 
  \left| \zeta(1+s  )^2  \zeta(1+s +2iT) \zeta(1+s -2iT)
  \right| 
  \frac{|ds|}{|s|}. 
$$
By Lemma \ref{small-s}, we can restrict the integral to the interval $\Im(s) \in [-T^\alpha, T^\alpha]$, up to a negligible error. Then, using again Weyl's subconvexity 
$\zeta(1+s \pm 2iT) \ll T^{\frac{\sigma_2}{3} + \epsilon}$
for the last two zeta-factors and the classical bound $\zeta(1   - 2iT ) \ll \log T$, we get
$$
|\mathcal{I R}(t)| \ll   T^{\frac{2 \sigma_2}{3} -\sigma_2(1-\alpha)+ \epsilon} 
 \int_{-T^\alpha}^{T^\alpha}     
  \left| \zeta(1-\sigma_2 + iy  )  
  \right|^2  \frac{dy}{1+|y|} \ll T^{\frac{2 \sigma_2}{3} -\sigma_2(1-\alpha)+ \epsilon} \ll T^{-\frac{1}{100}(\frac{1}{3} -\alpha)+ \epsilon},
$$
which is $O(T^{-\alpha})$ for $\alpha$ small enough. 
Next we turn to the residues $\mathcal{R R}_{s=\pm 2iT}$ from the poles at $s=\pm 2iT$, but we can immediately say that these are negligible by Lemma \ref{small-s}. 

Finally, we are left with the contribution of the residue $\mathcal{R R}_{s=0}$ at the triple pole at $s=0$. This residue equals $\frac12$ times the second derivative of
\[
  8\pi  \tilde{h}(1-s) \left( \frac{|t|(4T^2-t^2)^{\frac{1}{2}}}{4 T} \right)^{s}
  \frac{ \zeta(1   - 2iT )  \zeta(1+s +2iT) \zeta(1+s -2iT)}{\zeta(2+2s )} 
\]
at $s=0$, plus other terms which are asymptotically smaller.    
 We recall the following classical estimates for the Riemann zeta-function on the edge of the critical strip (see \cite[Theorem 8.27, Theorem 8.29]{IK} and \cite[Lemma 4.3]{DK1}):
\begin{equation}   \label{eq:Vinogradov_bounds}
\zeta(1 \pm iT) \ll (\log T)^{\frac{2}{3}}, \qquad
 \frac{\zeta'}{\zeta}(1 \pm iT) \ll (\log T)^{\frac{2}{3}+\epsilon}, 
 \qquad   
 \frac{\zeta''}{\zeta}(1 \pm iT) \ll (\log T)^{\frac{4}{3}+\epsilon}.
\end{equation}
It follows that 
\[
\mathcal{R R}_{s=0}  \sim  4\pi \tilde{h}(1) \frac{\zeta(1 +2iT) \zeta(1 -2iT)^2}{\zeta(2)}  \log^2 \left(  \frac{|t|(4T^2-t^2)^{\frac{1}{2}}}{ T }\right).
\]
Thus, writing $\tilde{h}(1)=\hat{h}(0)$, 
\begin{align}
\cD_{++}
\nonumber & \sim 
\int_{- \infty}^{\infty}    \frac{W(t)}{|t|(4T^2-t^2)^{\frac{1}{2}}}  4\pi \hat{h}(0) \frac{\zeta(1 +2iT) \zeta(1 -2iT)^2}{\zeta(2)}  \log^2 \left(  \frac{|t|(4T^2-t^2)^{\frac{1}{2}}}{ T }\right)
  \frac{d^*t}{2  \pi^2}\\
\nonumber  & \sim \frac{24}{\pi^3} \hat{h}(0) \zeta(1 +2iT) \zeta(1 -2iT)^2 \int_{0}^{\infty}  \frac{W(t)}{(4T^2-t^2)^{\frac{1}{2}}}  \log^2 \left(  \frac{t(4T^2-t^2)^{\frac{1}{2}}}{ T }\right) dt\\
 \nonumber  & \sim \frac{24}{\pi^3} \hat{h}(0) \zeta(1 +2iT) \zeta(1 -2iT)^2 \int_{T^{1-\frac{\alpha}{4}}}^{2T-T^{1-\frac{\alpha}{4}}}  \frac{ \log^2(t)}{(4T^2-t^2)^{\frac{1}{2}}}  dt\\
 \nonumber&  \sim \frac{24}{\pi^3} \hat{h}(0) \zeta(1 +2iT) \zeta(1 -2iT)^2 \left(   \int_{T^{1-\frac{\alpha}{4}}}^{T^{1-\frac{1}{\sqrt{\log T}}}}  +  \int_{T^{1-\frac{1}{\sqrt{\log T}}}} ^{2T-T^{1-\frac{\alpha}{4}}} \right)  \frac{ \log^2(t)}{(4T^2-t^2)^{\frac{1}{2}}}  dt.
\end{align}
The contribution of the first integral in the last line is 
\[
\ll   T^{-\frac{1}{\sqrt{\log T}}} \hat{h}(0) |\zeta(1 +2iT)|^3 \log^2 T \ll  \hat{h}(0).
\]
In the second integral, we write $\log t = \log T  + O(\sqrt{\log T})$. The integral can then be asymptotically evaluated using the arcsine function, as in  \cite[Section 6]{BK}, to give
\begin{equation}  \label{eq:D_++}
\cD_{++}  
\sim \hat{h}(0)
\frac{ 12  }{ \pi^2}  \zeta(1 +2iT) \zeta(1 -2iT)^2 \log^2 T.
\end{equation}

\subsection{Diagonal $\cD_{-+}$}   \label{Section_diag_II}

The treatment of $\cD_{-+}$ has a crucial difference to that of $\cD_{++}$. Whereas in the previous subsection we needed Weyl strength subconvexity bounds for the Riemann zeta function, here we will need the much deeper sub-Weyl subconvexity.

The diagonal contribution arising by applying (\ref{eq:Kuznetsov}) to
$$
\Xi_{-+}
= 
  \sum_{n\ge 1} \sum_{k \ge 1} \frac{ \tau(n, T )}{n^{\frac{1}{2} + iT}  k^{1   + 2iT}}    
\sum_{m \ge 1}  \frac{   \tau(m, T) }{ m^{\frac{1}{2} + i T }}
\sum_{\substack{j \ge 1   }} \frac{ \lambda_j(n) \lambda_j(m)}{L(1, \text{sym}^2 u_j)}
W(t_j) \cH(t_j)
 V_{-}(k^2 n, t_j) 
 \cV_{+}(m, t_j)
$$
is, for any $\sigma_1, \sigma_2 >0$,
\begin{align*}
\cD_{-+}  = & \sum_{n\ge 1} \sum_{k \ge 1} \frac{ \tau(n, T )^2}{n^{1 + 2iT}  k^{1   + 2iT}}    
  \int_{-\infty}^{\infty}
W(t) \cH(t)
 V_{-}(k^2 n, t) 
 \cV_{+}(n, t)  \frac{d^* t}{2\pi^2}.
\end{align*}
By (\ref{V+}) and \eqref{H}, we have 
\begin{align*}
\cD_{-+}  \sim &
\sum_{n\ge 1} \sum_{k \ge 1} \frac{ \tau(n, T )^2}{n^{1 + 2iT}  k^{1   + 2iT}}    
  \int_{-\infty}^{\infty}
 \frac{W(t) V_{-}(t) }{|t|(4T^2-t^2)^{\frac{1}{2}}}
\frac{1}{2 \pi i} \int_{(\sigma_1)} e^{w^2 +i \frac{\pi}{2}w}  
\left( \frac{|t|(4T^2-t^2)^{\frac{1}{2}}}{4 \pi^2 k^2 n} \right)^{w}
 \frac{dw}{w} 
 \\
 & \times 
 \frac{1}{2 \pi i} \int_{(\sigma_2)} 8\pi  \tilde{h}(1-s) \left( \frac{|t|(4T^2-t^2)^{\frac{1}{2}}}{4 T} \right)^{s}  n^{-s} \frac{ds}{s} 
 \frac{d^* t}{2\pi^2}
 \\
 \sim  &  
  \int_{-\infty}^{\infty}
 \frac{W(t) V_{-}(t) }{|t|(4T^2-t^2)^{\frac{1}{2}}}
\frac{1}{2 \pi i} \int_{(\sigma_1)} e^{w^2 +i \frac{\pi}{2}w}  
\left( \frac{|t|(4T^2-t^2)^{\frac{1}{2}}}{4 \pi^2  } \right)^{w}
\sum_{k \ge 1} \frac{1}{ k^{1   + 2iT +2w}}
 \\
 &
 \times
 \frac{1}{2 \pi i} \int_{(\sigma_2)} 8\pi  \tilde{h}(1-s) \left( \frac{|t|(4T^2-t^2)^{\frac{1}{2}}}{4 T} \right)^{s}
 \sum_{n\ge 1}  \frac{ \tau(n, T )^2}{n^{1 + 2iT +w +s} } 
 \frac{ds}{s} 
 \frac{dw}{w} 
 \frac{d^* t}{2\pi^2}
 \\
 \sim   &  
  \int_{-\infty}^{\infty}
 \frac{W(t) V_{-}(t) }{|t|(4T^2-t^2)^{\frac{1}{2}}}
\frac{1}{2 \pi i} \int_{(\sigma_1)} e^{w^2 +i \frac{\pi}{2}w}   \frac{1}{2 \pi i} \int_{(\sigma_2)}
\left( \frac{|t|(4T^2-t^2)^{\frac{1}{2}}}{4   } \right)^{w+s}8\pi  \tilde{h}(1-s) 
\\
& \times
  \frac{ \zeta(1   + 2iT +2w) \zeta(1  +w +s) 
 \zeta(1 + 2iT +w +s)^2   
  \zeta(1 + 4iT +w +s)}{\zeta(2 + 4iT +2w +2s)} 
 \frac{ds}{s} 
 \frac{dw}{w} 
 \frac{d^* t}{2\pi^2}.
\end{align*}
Let
$\sigma_1= \frac{1}{101} \; < \; \sigma_2 = \frac{1}{100}$. We first move the $w$-integral to the left, to the line $\Re(w)=-\sigma_1$. In doing so, we cross simple poles at $w=0$ and $w= -iT$. By the same argument as given for $\cD_{-+}$ in the previous subsection, and keeping in mind that $|V_{-}(t)|=1$ by \eqref{V-size_1}, we have that the shifted integral and residue at $w= -iT$ contribute $O(T^{-\alpha})$ if $\alpha$ is chosen small enough. Thus we need only consider the contribution of the simple pole at $w=0$, which equals
\begin{align*}
&  
  \int_{-\infty}^{\infty}
 \frac{W(t) V_{-}(t) }{|t|(4T^2-t^2)^{\frac{1}{2}}}
   \frac{1}{2 \pi i} \int_{(\sigma_2)}
\left( \frac{|t|(4T^2-t^2)^{\frac{1}{2}}}{4 \pi^2  } \right)^{s}8\pi  \tilde{h}(1-s) 
\\
& \times
  \frac{ \zeta(1   + 2iT) \zeta(1  +s) 
 \zeta(1 + 2iT  +s)^2   
  \zeta(1 + 4iT +s)}{\zeta(2 + 4iT  +2s)} 
 \frac{ds}{s} 
 \frac{d^* t}{2\pi^2}.
\end{align*}
Next, we move the contour to the left to $\Re(s)=-\sigma_2 <0$, crossing a double pole at $s=0$, a double pole at $s=-2iT$ and a simple pole at $s=-4iT$. We can immediately say that the poles at at $s=-2iT$ and $s=-4iT$ contribute a negligible amount, by Lemma \ref{small-s}. Thus we only need to address the contributions of the double pole at $s=0$ and of the new integral at $\Re(s)=-\sigma_2 <0$.

For the pole at $s=0$ we follow the same type of calculations as we had for (\ref{eq:D_++}), but this time we have a double pole instead of a triple pole, so get that
\begin{align*}
 & \int_{-\infty}^{\infty}
 \frac{W(t) V_{-}(t) }{|t|(4T^2-t^2)^{\frac{1}{2}}} \zeta(1   + 2iT)\\
&\times \underset{s=0}{\mathrm{Res}}\left[
\left( \frac{|t|(4T^2-t^2)^{\frac{1}{2}}}{4 \pi^2  } \right)^{s}\frac{8\pi  \tilde{h}(1-s)}{s} 
  \frac{ \zeta(1  +s) 
 \zeta(1 + 2iT  +s)^2   
  \zeta(1 + 4iT +s)}{\zeta(2 + 4iT  +2s)} \right]
 \frac{d^* t}{2\pi^2}\\
&\ll \hat{h}(0)
|\zeta(1  + 4iT)| |\zeta(1  + 2iT)|^3 \log T  
\ll 
\hat{h}(0) 
|\zeta(1  + 2iT)|^3 (\log T)^{\frac{5}{3} + \epsilon}
\end{align*}
and this is smaller in size than the main term (\ref{eq:D_++}) that we saw for $\cD_{++}$.

It remains to consider the contribution of the shifted integral,
\[
\int_{-\infty}^\infty   \frac{W(t) |V_{-}(t)| }{|t|(4T^2-t^2)^{\frac{1}{2}}} |\mathcal{I R}(t)| d^*t \ll 
T^{-1} \int_{-\infty}^\infty  W(t)  |\mathcal{I R}(t)| dt,
\]
where, following the same type of notation as in the previous subsection, we denote
\begin{align*}
 \mathcal{I R}(t):= &
   \frac{1}{2 \pi i} \int_{(-\sigma_2)}
\left( \frac{|t|(4T^2-t^2)^{\frac{1}{2}}}{4 T  } \right)^{s}8\pi  \tilde{h}(1-s) 
\\
& \times
  \frac{ \zeta(1   + 2iT) \zeta(1  +s) 
 \zeta(1 + 2iT  +s)^2   
  \zeta(1 + 4iT +s)}{\zeta(2 + 4iT  +2s)} 
 \frac{ds}{s}. 
\end{align*}
The subconvexity estimate $\zeta(\frac{1}{2}+it) \ll |t|^{\frac{13}{84}+\epsilon}$ from \cite{B} (of sub-Weyl quality!) and the Phragm\'{e}n-Lindel\"{o}f principle together give that $\zeta(1- \sigma +it) \ll |t|^{\frac{13}{42}\sigma+ \epsilon}$, for $0 \le \sigma \le \frac{1}{2}$.
We  apply this estimate  to three (of the four) zeta factors in the numerator to get
\begin{align*}
|\mathcal{I R}(t)| &\ll   T^{\frac{39}{42}\sigma_2 + \epsilon} 
\int_{- T^{-\alpha}}^{T^\alpha}
    |  \tilde{h}(1-\sigma_2-iy) | |\zeta(1 -\sigma_2+iy) | \left( \frac{|t|(4T^2-t^2)^{\frac{1}{2}}}{4 T} \right)^{-\sigma_2}
  \frac{dy}{1+|y|} \\
  &\ll   T^{\frac{39}{42}\sigma_2 -(1-\alpha)\sigma_2 + \epsilon}\int_{- T^{-\alpha}}^{T^\alpha}
  \frac{|\zeta(1 -\sigma_2+iy) |}{1+|y|} dy\\
  &\ll   T^{\frac{39}{42}\sigma_2 -(1-\alpha)\sigma_2 + \epsilon} \ll T^{-\frac{1}{1400} + \frac{\alpha}{100}+\epsilon},
\end{align*}
so that
\[
T^{-1} \int_{-\infty}^\infty  W(t)  |\mathcal{I R}(t)| dt\ll T^{-1-\frac{1}{1400} + \frac{\alpha}{100}+\epsilon} \int_{-\infty}^\infty  W(t) dt\ll T^{-\frac{1}{1400} + \frac{\alpha}{100}+\epsilon},
\]
which is $O(T^{-\alpha})$ for $\alpha$ small enough.

\subsection{Diagonal $\cD_{+-}$}

The argument is the same as that given in the previous subsection for $\cD_{-+}$, and we get 
$$
\cD_{+-} \ll 
\hat{h}(0) 
|\zeta(1  + 2iT)|^3 (\log T)^{\frac{5}{3} + \epsilon}.
$$

\subsection{Diagonal $\cD_{--}$} 

The argument here is analogous to the one given previously for $\cD_{++}$ and as in that case, we get a main term.  
Using \eqref{H-}, we have
\begin{align*}
\cD_{--} &\sim 
    \int_{- \infty}^{\infty}     \frac{W(t) T^{2iT} \pi^{-4iT} e^{-2iT} }{|t|(4T^2-t^2)^{\frac{1}{2}}} 
  \frac{1}{2 \pi i} \int_{(\sigma_1)} e^{w^2 -i \frac{\pi}{2}w - i \pi s}  \frac{1}{2 \pi i} \int_{(\sigma_2)} 
\left( \frac{|t|(4T^2-t^2)^{\frac{1}{2}}}{4    } \right)^{w+s}  \pi^{-2w}T^{-s}
\\
 & \times  8\pi  \tilde{h}(1-s) 
  \frac{\zeta(1   +2iT +2 w) \zeta(1+w+s  )^2  \zeta(1+w+s +2iT) \zeta(1+w+s -2iT)}{\zeta(2+2w+2s )} 
  \frac{ds}{s} 
  \frac{dw}{w}     
    \frac{d^*t}{2  \pi^2}.
\end{align*}
As before, we let $\sigma_1= \frac{1}{101} \; < \; \sigma_2 = \frac{1}{100}$ and then shift the contours left. The main contribution arises from the simple pole at $w=0$ and triple pole at $s=0$. This is
\begin{align*}
\cD_{--}
& \sim 
\int_{- \infty}^{\infty}    \frac{W(t) T^{2iT} \pi^{-4iT} e^{-2iT} }{|t|(4T^2-t^2)^{\frac{1}{2}}}  4\pi \hat{h}(0) \frac{\zeta(1 +2iT)^2 \zeta(1 -2iT)}{\zeta(2)}  \log^2 \left(  \frac{|t|(4T^2-t^2)^{\frac{1}{2}}}{ T }\right)
  \frac{d^*t}{2  \pi^2}\\
  &  \sim  \hat{h}(0)
 \pi^{-4iT}  e^{-2iT}
 T^{2iT} 
 \frac{ 12 }{  \pi^2}  
 \zeta(1 +2iT )^2 
\zeta(1 -2iT ) \log^2 T,
  \end{align*}
  by the same final calculation leading to \eqref{eq:D_++}.

\subsection{The total diagonal contribution}

We collect all diagonal contributions from the sum (\ref{eq_final_expr_before_Kuznetsov}):
\begin{align}
\frac{\pi}{\zeta^2(1 -2i T) \zeta(1 +2i T)} 
& \left( \cD_{++} + \cD_{-+} + \bc \pi^{2iT}  \cD_{+-} + \bc \pi^{2iT} \cD_{--}  \right)  \nonumber \\
& \sim \frac{\pi}{\zeta^2(1 -2i T) \zeta(1 +2i T)} \left( \cD_{++}  
  + \bc \pi^{2iT} \cD_{--}  \right)    \nonumber \\
  & 
   \sim \hat{h}(0)  \frac{12}{\pi}    
    (\log T)^2
     \left( 1      
  +   \frac{\xi(1-2iT)}{\xi(1+2iT)} \pi^{-2iT}    e^{-2iT} T^{2iT}   
  \frac{\zeta(1 + 2iT)}{\zeta(1 - 2iT)}          \right)  \nonumber \\
  &
  \sim  \hat{h}(0)  \frac{12}{\pi}   
    (\log T)^2
     \left( 1      
  +   \frac{\Gamma(\frac{1}{2} -iT) }{ \Gamma(\frac{1}{2} +iT)  }    e^{-2iT} T^{2iT}   
          \right)  \nonumber \\
          &
          \sim  \hat{h}(0)  \frac{24}{\pi}    
    (\log T)^2,   \label{eq:total_diagonals}
\end{align}
where the last line follows 
by Stirling's approximation (\ref{eq:stirling}).

\section{The Eisenstein series contribution}

We recombine the Eisenstein series contributions from $\Xi_{++}$ and $\Xi_{-+}$ (while for $\Xi_{+-}$ and $\Xi_{--}$, one proceeds analogously) to get that
$\cE_{++} + \cE_{-+}$ is equal to
$$
-
\int_{- \infty}^{\infty}  \frac{W(t) \cH(t) }{|\zeta(1+ 2it)|^2}   
\left(
\sum_{n\ge 1} \sum_{k \ge 1} \frac{ \tau(n, T ) \tau(n, t)}{n^{\frac{1}{2} - iT}  k^{1   - 2iT}}  V_{+ }(k^2 n, t) 
+
\sum_{n\ge 1} \sum_{k \ge 1} \frac{ \tau(n, T ) \tau(n, t)}{n^{\frac{1}{2} + iT}  k^{1   + 2iT}} 
    V_{- }(k^2 n, t) \right) 
$$
$$
\times
   \sum_{m \ge 1}  \frac{   \tau(m, T) \tau(m, -t)}{ m^{\frac{1}{2} + i T }}
      \cV_{+}(m, t)
   \frac{d t}{2 \pi}.    
$$

The sum of double sums over $n$ and $k$ is an approximate functional equation (see e.g. \cite[Theorem 5.3]{IK}) of $|\zeta(\tfrac{1}{2} +it)|^2 \zeta(\tfrac{1}{2} -2iT +it) \zeta(\tfrac{1}{2} -2iT -it)$, and by Ramanujan's identity (\ref{eq:Ramanujan}),
$$
\sum_{m \ge 1}  \frac{   \tau(m, T) \tau(m, -t) }{ m^{\frac{1}{2} + i T +s}} =\frac{\zeta(\frac{1}{2}  -it +s ) \zeta(\frac{1}{2} +it +s ) 
\zeta(\frac{1}{2} +2iT -it +s ) \zeta(\frac{1}{2} +2iT +it +s )}{\zeta(1 +2iT +2s )}.
$$
 Therefore for $\sigma > \frac{1}{2}$, we have 
\begin{align}
& \frac{\pi}{\zeta^2(1 -2i T) \zeta(1 +2i T)} \left( \cE_{++} + \cE_{-+}  \right)   \nonumber \\
& =  
-\pi
\int_{- \infty}^{\infty} 
W(t) 
\frac{|\zeta(\tfrac{1}{2} +it)|^2 \zeta(\tfrac{1}{2} -2iT +it) \zeta(\tfrac{1}{2} -2iT -it) }{|\zeta(1+ 2it)|^2   \zeta^2(1 -2i T) \zeta(1 +2i T)}  
 \frac{1}{2 \pi i} \int_{(\sigma)}\cH(t)  \cH_{+}(s, t)  
\nonumber  \\
& \qquad \times  
\frac{\zeta(\frac{1}{2}  -it +s ) \zeta(\frac{1}{2} +it +s ) 
\zeta(\frac{1}{2} +2iT -it +s ) \zeta(\frac{1}{2} +2iT +it +s )}{\zeta(1 +2iT +2s )}
\frac{ds}{s}  
   \frac{d t}{2 \pi}.   \label{eq:Eisen_final}
\end{align}
Since $\sigma > \frac{1}{2}$, all the zeta factors in the numerator in the third line are absolutely bounded. So by \eqref{H}, the $s$-integral is $O(T^{ -2 +\sigma+ \epsilon})$ uniformly for any $t$ in the bulk range. The zeta values $\zeta(\tfrac{1}{2} -2iT +it)$, $\zeta(\tfrac{1}{2} -2iT -it)$ are each $O(T^{\frac{1}{6} + \epsilon})$ by Weyl's bound (although here, any subconvexity bound would suffice). Thus
\[
 \frac{\cE_{++} + \cE_{-+} }{\zeta^2(1 -2i T) \zeta(1 +2i T)} 
 \ll T^{ -2 +\sigma+\frac{1}{3}+\epsilon} \int_{-2T}^{2T} |\zeta(\tfrac{1}{2} +it)|^2 dt \ll T^{ -\frac23+\sigma+\epsilon},
\]
which is admissible if we choose  $\sigma=\frac{1}{2} + \frac{1}{100}$, say.

\section{The off-diagonal contribution} \label{off-diag}

We treat only the sum $\cO_{++}$ as the other three cases are similar. We will show that
\begin{align}
\label{off-diag-goal}  \cO_{++}  \ll T^{-1},
\end{align}
where
\begin{align}
 \cO_{++} = & \int_{-\infty}^{\infty} h(A) 
\sum_{n, m, k, c \ge 1}  \frac{\tau(n, T ) \tau(m, T)}{n^{\frac{1}{2} - iT} k^{1   - 2iT}  m^{\frac{1}{2} + i T }} \frac{S(n, m; c)}{c} 
\nonumber  \\
& \times
\int_{- \infty}^{\infty} \mathcal{J}\left( \frac{\sqrt{nm}}{c}, t \right)  
W(t) \cH(t)  
V_{+ }(k^2 n, t) \cV_{+}(m, t)
\frac{d^*t}{2 \pi} \ dA.   \label{eq:O++}
\end{align}
By the properties of the weight functions $V_{+ }(n k^2, t)$ and $\cV_{+}(m, t)$ given in Section \ref{Subsection_weights_support},  we may restrict the summation in (\ref{eq:O++}) to $nk^2 <T^{2+\epsilon}$ and $m< T^{1+\epsilon}$, up to an error of size $O(T^{-100})$. By a standard method  we may also restrict to $c < T^3$, see e.g. the paragraph at the top of page 1495 in \cite{BK}.  

Recall that by Lemma \ref{small-s}, we may assume that $|s|<T^{\alpha}$ in the definition \eqref{cV-def} of $\cV_{+}(m, t)$, up to negligible error. Also recall that in the definition \eqref{V+def} of $V_+(k^2 n,t)$ we may assume that $|w|<T^\epsilon$, by the rapid decay of $e^{w^2}$ in vertical lines.

Let $Z$ be a function as described in Lemma \ref{Lema:J_int} below, so that $Z(\frac{t}{T})$ is supported on $T^{1-2\alpha}<|t|<T^{1+2\alpha}$. Since $W(t)$ is already supported on $T^{1 - \alpha}\le |t| \le  2T - T^{1- \alpha}$ up to an error of $O(T^{-100})$, we may insert $Z(\frac{t}{T})$ into \eqref{eq:O++} up to $O(T^{-100})$ error. Thus to establish \eqref{off-diag-goal}, it suffices to show that for any $s,w\in\mathbb{C}$ with $\Re(s),\Re(w)>0$ fixed, $|w|<T^\epsilon$, $|s|<T^\alpha$, we have
\begin{multline}
\label{suffices} \sum_{\substack{
nk^2< T^{2+\epsilon} \\
m<T^{1+\epsilon}\\
c<T^3}}  \frac{\tau(n, T ) \tau(m, T)}{n^{\frac{1}{2} - iT+w} k^{1   - 2iT+2w}  m^{\frac{1}{2} + i T+s }} \frac{S(n, m; c)}{c} 
 \int_{- \infty}^{\infty} \mathcal{J}\left( \frac{\sqrt{nm}}{c}, t \right)  
Z\left(\frac{t}{T}\right) W(t) \cH(t)   \cH_{+}(s, t) \\
\times e^{w^2} \frac{\prod_{\pm} \Gamma_{\R}(\frac{1}{2}    +w \pm i t) \Gamma_{\R}(\frac{1}{2}-2iT +w \pm i t)}{\prod_{\pm} \Gamma_{\R}(\frac{1}{2}  \pm i t) \Gamma_{\R}(\frac{1}{2}-2iT  \pm i t)}   \frac{d^*t}{2 \pi} \ \ll T^{-\frac32}.
\end{multline}

Next we recall the following information about the $J$-Bessel function transform:
\begin{lem} \label{Lema:J_int} \cite[Lemma 3.3]{BK} Let $ 0 < \alpha < \frac{1}{100}$. For any $x > 0$ and any smooth, even function $Z$ compactly
supported on $(T^{-2\alpha}, T^{2\alpha}) \cup  (-T^{2\alpha}, -T^{-2\alpha})$ with derivatives satisfying $\| Z^{(k)} \|_{\infty} \ll_k (T^{2\alpha})^k$ for $k\ge 0$, we have
$$
\int_{- \infty}^{\infty} \frac{J_{2 i t}(2 \pi x)}{\cosh(\pi t )}  Z\left(\frac{t}{T} \right) t \, dt
$$
$$
=\frac{-i \sqrt{2}}{\pi} \frac{T^2}{\sqrt{x}} \Re\left( (1+i)e(x) \int_0^{\infty} t Z(t)  e\left( \frac{-t^2 T^2}{2 \pi^2 x} \right) dt  \right) +O\left( \frac{x}{T^{3-24 \alpha}} \right) +O(T^{-100}).
$$
The main term is $O(T^{-100})$ if $x < T^{2 -6 \alpha}$.
\end{lem}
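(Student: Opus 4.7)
The strategy is to obtain a uniform asymptotic expansion of $J_{2it}(2\pi x)/\cosh(\pi t)$ for $|t|\asymp T$ and then evaluate the $t$-integral by stationary phase.

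First, I would derive a uniform asymptotic for $J_{2it}(2\pi x)$ in the range $|t|$ lying in the support of $Z(t/T)$. Either Debye's expansion (away from the turning point) or Olver's uniform asymptotic expansion (through the turning point) applies: in the oscillatory regime $\pi x > |t|$ one obtains an expression of the form
\begin{equation*}
J_{2it}(2\pi x) = \frac{C_+\, e^{i\psi(x,t)} + C_-\, e^{-i\psi(x,t)}}{(4\pi^2 x^2 - 4t^2)^{1/4}}\bigl(1 + O(T^{-1})\bigr),
\end{equation*}
with an explicit phase $\psi(x,t)$ and constants $C_\pm$ that contain the factor $e^{\pi|t|}$, which exactly cancels against $\cosh(\pi t)$ in the denominator. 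Outside the oscillatory regime the Bessel quotient decays exponentially in $T$ and hence contributes $O(T^{-100})$.

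Next, substitute this expansion into the target integral and change variables $t=T\tau$. The problem reduces to a sum over two signs of
\begin{equation*}
T^2\int Z(\tau)\,\tau\, \frac{e^{\pm i\psi(x,T\tau)}}{(4\pi^2 x^2 - 4T^2\tau^2)^{1/4}}\,d\tau.
\end{equation*}
For $x\gg T\tau$ the phase expands as $\psi(x,T\tau) = 2\pi x - \tau^2 T^2/(\pi x) + O(T^4\tau^4/x^3)$, so the leading oscillation factorises as $e^{\pm 2\pi i x}\cdot e^{\mp i\tau^2 T^2/(\pi x)}$. Replacing the amplitude by its leading value $1/\sqrt{2\pi x}$ and combining the two signs via the standard $e^{\pm i\pi/4}$ bookkeeping produces the combination $(1+i)$ and the desired main term $\tfrac{-i\sqrt{2}}{\pi}\tfrac{T^2}{\sqrt{x}}\Re\!\bigl((1+i)e(x)\int_0^\infty \tau Z(\tau)\,e(-\tau^2T^2/(2\pi^2 x))\,d\tau\bigr)$. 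The dichotomy on $x$ then falls out of the reduced oscillatory integral in $\tau$: when $x < T^{2-6\alpha}$, the phase $\tau^2 T^2/(\pi x)$ has derivative of size $\gtrsim T^{4\alpha}$ on $\mathrm{supp}(Z)$, while $\|Z^{(k)}\|_\infty \ll T^{2\alpha k}$, so integrating by parts $k$ times saves a factor $T^{-2\alpha k}$, yielding $O(T^{-100})$ after enough steps.

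The main obstacle is double: (a) assembling the stationary-phase constants so that they combine precisely into $(1+i)e(x)$ rather than a closely related expression, and (b) producing the uniform error $O(x/T^{3-24\alpha})$ with its specific exponent. The factor $T^{24\alpha}$ in the error suggests that roughly twelve integration-by-parts iterations are needed, each losing a controlled factor $T^{2\alpha}$ coming from $\|Z^{(k)}\|_\infty$, and this loss has to be tracked carefully across both the deep oscillatory regime and the transition region $x \sim T$ where Airy-type behaviour enters and Olver's uniform expansion (rather than Debye's) must be invoked.
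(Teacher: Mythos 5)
Since the paper cites this result directly from \cite[Lemma 3.3]{BK} without reproving it, the comparison is against that cited proof. The argument in \cite{BK} does not use uniform Bessel asymptotics at all: it starts from an exact integral representation expressing $J_{2it}(y)/\cosh(\pi t)$ (after symmetrizing in $t$, using that $Z$ is even) as an integral of $e^{\pm iy\cosh v}$ against $e^{-2itv}$, interchanges the order of integration, evaluates the inner $t$-integral exactly as a Fourier transform of $tZ(t/T)$, and then performs a single stationary phase/Fresnel analysis in the auxiliary variable $v$, with the stationary point at $v=0$ producing the $e(x)$, the Gaussian quadratic term producing $e(-t^2T^2/(2\pi^2 x))$, and the amplitude $T^2/\sqrt{x}$. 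That route treats all values of $x$ uniformly with a single contour/representation, so there is no transition region to worry about. Your approach, going through Debye/Olver expansions of $J_{2it}$ and then doing stationary phase directly in $t$, is a genuinely different route, and could in principle work, but as written it has two concrete problems.

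First, your dismissal of the region $\pi x<|t|$ is wrong as stated. For $\nu=2it$ purely imaginary the Bessel equation has no real turning point in the usual sense, and in fact $J_{2it}(2\pi x)/\cosh(\pi t)\asymp |t|^{-1/2}$ when $\pi x\ll|t|$ --- it is bounded, not exponentially small. (One sees this from $J_{2it}(z)\approx (z/2)^{2it}/\Gamma(1+2it)$ and $|\Gamma(1+2it)|\asymp\sqrt{|t|}\,e^{-\pi|t|}$.) The contribution of this range is indeed negligible, but only because the phase $\sim 2t\log(|t|/\pi x)$ oscillates rapidly in $t$ relative to the scale $T^{1-2\alpha}$ on which $Z(t/T)t$ varies, so the smallness must come from repeated integration by parts, not from pointwise exponential decay. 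You need to replace the exponential-decay claim with that oscillation argument.

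Second, you flag the Airy/transition region $x\asymp T$ yourself but do not address it, and this is the genuine obstruction in your route. The support of $Z(t/T)$ forces $|t|\in(T^{1-2\alpha},T^{1+2\alpha})$, and the summation ranges in the application (cf.\ $x=\sqrt{nm}/c$ with $m<T^{1+\epsilon}$, $n<T^{2+\epsilon}$, $c\ge 1$) certainly hit $x\asymp T$, where the Debye expansion breaks down and one has to use Olver's uniform Airy-type expansion and match it to the two Debye branches. Tracking the constants so that they assemble into the exact $(1+i)e(x)$ factor and the error $O(x/T^{3-24\alpha})$ through that matching is substantial work that the sketch does not carry out; the integral-representation route avoids it entirely, which is precisely why \cite{BK} takes it.
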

\noindent First, we need to show that Lemma \ref{Lema:J_int} applies to  \eqref{suffices}, and for this we need to show that the functions
\[
 W(t), \  \ \  e^{w^2} \frac{\prod_{\pm} \Gamma_{\R}(\frac{1}{2}    +w \pm i t) \Gamma_{\R}(\frac{1}{2}-2iT +w \pm i t)}{\prod_{\pm} \Gamma_{\R}(\frac{1}{2}  \pm i t) \Gamma_{\R}(\frac{1}{2}-2iT  \pm i t)}, \ \ \ \cH(t)  \cH_{+}(s, t)
\]
can be `absorbed' into $Z(\frac{t}{T})$, which is to say that they satisfy the same bounds as
\[
\frac{d^k}{dt^k} Z\Big(\frac{t}{T}\Big) \ll_k T^{k(-1+2\alpha)}.
\]
For $W(t)$, this was already observed in \cite[Lemma 5.1]{BK}. For the remaining functions, this can easily be checked after using Stirling's expansion. The leading terms are, by \eqref{V+} and \eqref{H},
\[
e^{w^2} \frac{\prod_{\pm} \Gamma_{\R}(\frac{1}{2}    +w \pm i t) \Gamma_{\R}(\frac{1}{2}-2iT +w \pm i t)}{\prod_{\pm} \Gamma_{\R}(\frac{1}{2}  \pm i t) \Gamma_{\R}(\frac{1}{2}-2iT  \pm i t)}
= e^{w^2+i\frac{\pi}{2}w} \left( \frac{|t|(4T^2-t^2)^{\frac{1}{2}}}{4 \pi^2 } \right)^{w}+\ldots
\] 
and
\[
\cH(t)  \cH_{+}(s, t) = \frac{8\pi  \tilde{h}(1-s)}{|t|(4T^2-t^2)^{\frac{1}{2}}} \left( \frac{|t|(4T^2-t^2)^{\frac{1}{2}}}{4 T} \right)^{s} + \ldots
\]
for $t$ in the bulk range. Keep in mind that $|s|<T^\alpha$, which is crucial to control the size of derivatives with respect to $t$.

Since $\frac{\sqrt{nm}}{c} < T^{\frac{3}{2} + \epsilon}$ for all pairs $n, m$ in our summation \eqref{suffices}, we see that the main term from Lemma  \ref{Lema:J_int} gives a total contribution of $O(T^{-50})$ and the error term $O(\frac{\sqrt{nm}}{c T^{3-24 \alpha}})$ gives a contribution of
$$
O\left( \frac{T^{\epsilon}}{T^{2-\alpha}} \sum_{\substack{n k^2 < T^{2+\epsilon} \\ m< T^{ 1+ \epsilon} \\ c < T^3}}  \frac{|\tau(n, T ) \tau(m, T)|}{(nm)^{\frac{1}{2} } k} \frac{|S(n, m; c)|}{c}
\frac{\sqrt{nm}}{c T^{3-24 \alpha}}
 \right) =O(T^{-2 +25 \alpha +\epsilon}),
$$
using Weil's bound for Kloosterman sums. The bound is admissible since $\alpha < \frac{1}{100}$.

\section{Conclusion}

Taking everything together, 
by (\ref{eq:total_diagonals}) and the upper bounds for the Eisenstein series and off-diagonal contributions, we obtain the following proposition:

\begin{prop} \label{prop:cross_term} 
For $h$ as in Proposition \ref{the-prop},
as $T \rightarrow \infty$, we have
$$
\int_{-\infty}^\infty h(A) \ \Xi(A)  \ dA  \sim \hat{h}(0) \frac{24}{\pi}    
    (\log T)^2.
$$
\end{prop}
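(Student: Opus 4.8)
The plan is to assemble the pieces established in the preceding sections. The starting point is the decomposition \eqref{eq_final_expr_before_Kuznetsov},
\[
\int_{-\infty}^\infty h(A)\,\Xi(A)\,dA \;\sim\; \frac{\pi}{\zeta^2(1-2iT)\,\zeta(1+2iT)}\Bigl(\Xi_{++}+\Xi_{-+}+\bc\,\pi^{2iT}\Xi_{+-}+\bc\,\pi^{2iT}\Xi_{--}\Bigr).
\]
First I would apply Kuznetsov's trace formula \eqref{eq:Kuznetsov} to each of the four inner spectral sums $\Xi_{\pm\pm}$ — the weight $\phi(t)=W(t)\cH(t)V_{\pm}(k^2 n,t)\cV_{\pm}(m,t)$ meets its analytic hypotheses, as noted after \eqref{eq:Kuznetsov} — thereby writing $\Xi_{\pm\pm}=\cD_{\pm\pm}+\cE_{\pm\pm}+\cO_{\pm\pm}$ as a sum of a diagonal, an Eisenstein-spectrum, and a Kloosterman (off-diagonal) term.

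Next I would invoke the three estimates already in hand. The diagonal: by the computations of Sections~\ref{Section_diag_I}--\ref{Section_diag_II} and their analogues, summarised in \eqref{eq:total_diagonals}, the total diagonal contribution is $\sim \hat h(0)\,\tfrac{24}{\pi}(\log T)^2$. The key structural point here is that $\cD_{-+}$ and $\cD_{+-}$ are of strictly smaller order (of size $\hat h(0)\,|\zeta(1+2iT)|^{3}(\log T)^{5/3+\epsilon}$), while $\cD_{++}$ and $\bc\,\pi^{2iT}\cD_{--}$ each produce a genuine main term of size $\hat h(0)\,\tfrac{12}{\pi}(\log T)^2$, and these two reinforce rather than cancel because, after expanding $\xi(s)=\pi^{-s/2}\Gamma(s/2)\zeta(s)$,
\[
\frac{\xi(1-2iT)}{\xi(1+2iT)}\,\pi^{-2iT}e^{-2iT}T^{2iT}\,\frac{\zeta(1+2iT)}{\zeta(1-2iT)}=\frac{\Gamma(\tfrac12-iT)}{\Gamma(\tfrac12+iT)}\,e^{-2iT}T^{2iT}=1+o(1)
\]
by Stirling's approximation \eqref{eq:stirling}. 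The Eisenstein terms: $\frac{\pi}{\zeta^2(1-2iT)\zeta(1+2iT)}(\cE_{++}+\cE_{-+})\ll T^{-2/3+\sigma+\epsilon}$, which for the admissible choice $\sigma=\tfrac12+\tfrac1{100}$ is $O(T^{-\alpha})$, and likewise for the pair $\cE_{+-},\cE_{--}$. The off-diagonal terms: $\cO_{\pm\pm}\ll T^{-1}$, so after multiplication by $\frac{\pi}{\zeta^2(1-2iT)\zeta(1+2iT)}\ll(\log T)^{3}$ they are $\ll T^{-1+\epsilon}$.

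Finally I would compare with the asserted asymptotic. Since $\hat h(0)\asymp T^{-\alpha/2}$ while every error term above is $\ll T^{-\alpha+\epsilon}=o\bigl(\hat h(0)(\log T)^2\bigr)$, the diagonal main term dominates and $\int_{-\infty}^\infty h(A)\Xi(A)\,dA\sim \hat h(0)\,\tfrac{24}{\pi}(\log T)^2$, as claimed. In this final assembly there is essentially nothing left to fight: all the real difficulty has been front-loaded into the diagonal evaluation of Section~\ref{Section_diag_II} (where a sub-Weyl subconvexity bound for $\zeta$ on the $1$-line is needed to kill a shifted contour) and into the $J$-Bessel transform analysis of Section~\ref{off-diag}. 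The only point in the present step that requires a moment's care — and the one I would isolate as the crux — is the Stirling computation above showing that the $\cD_{++}$ and $\cD_{--}$ main terms add, since this is precisely what upgrades the constant from $\tfrac{12}{\pi}$ to the conjectural $\tfrac{24}{\pi}$.
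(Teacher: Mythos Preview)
Your proposal is correct and matches the paper's own argument: the proposition is obtained precisely by assembling the diagonal asymptotic \eqref{eq:total_diagonals}, the Eisenstein-spectrum bound from \eqref{eq:Eisen_final}, and the off-diagonal bound \eqref{off-diag-goal}, exactly as you outline. Your identification of the Stirling computation (showing $\cD_{++}$ and $\bc\,\pi^{2iT}\cD_{--}$ reinforce to double the constant) as the only delicate point in the final assembly is also on target.
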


Now we are ready to prove the main Proposition. 
\begin{proof}[Proof of Proposition \ref{the-prop}] The proposition follows by combining Lemma \ref{lema:spectral_expansion} with (\ref{bound_E_A^2_continuous}), (\ref{bound_H_A_continuous}) and Propositions \ref{prop:constant}, \ref{prop:IMRN_main}, \ref{prop:H_A_H_A} and \ref{prop:cross_term}.
\end{proof}

\


\bibliographystyle{amsplain}

\end{document}